\pdfoutput=1
\documentclass[12pt,a4paper]{amsart}
\setcounter{secnumdepth}{5}
\usepackage[top=1.15in, bottom=1.15in, left=1.17in, right=1.17in]{geometry}
\usepackage{amsthm, amsmath, amssymb, amscd, latexsym, multicol, verbatim, enumerate, graphicx,xy, color, stmaryrd}
\usepackage{tikz}
\usepackage{float}
\usepackage{hyperref}
\hypersetup{colorlinks=true, pdfstartview=FitV, linkcolor=blue, citecolor=blue, urlcolor=blue, breaklinks=true}
\usepackage[english]{babel}
\usepackage[T1]{fontenc}
\usepackage[utf8]{inputenc}
\usepackage{mathrsfs}
\usepackage{ytableau}
\usepackage{cleveref}
\usepackage{tikz-cd}
\usepackage{filecontents}
\usepackage{adjustbox}
\usetikzlibrary{arrows,backgrounds,calc,trees,hobby}
\usepackage{csquotes}
\usepackage{graphicx}
\usepackage{tabularx,booktabs}
\usepackage[clockwise]{rotating}
\usepackage{xspace}
\usepackage[ruled, vlined, linesnumbered]{algorithm2e}

\renewcommand{\prec}{\lessdot}

\usepackage[backend=bibtex,style=alphabetic,doi=false,isbn=false,url=false,giveninits=true,maxbibnames=50]{biblatex}
\addbibresource{main.bib}

\setlength{\marginparwidth}{1.2in}

\newcounter{cnt}
 \makeatletter
\def\mydggeometry{\makeatletter\dg@YGRID=1\dg@XGRID=20\unitlength=0.003pt\makeatother}
\makeatother

\newtheorem{theorem}{Theorem}[section]
\newtheorem*{theorem*}{Theorem}
\newtheorem{lemma}[theorem]{Lemma}
\newtheorem{corollary}[theorem]{Corollary}
\newtheorem{proposition}[theorem]{Proposition}

\newtheorem*{conjecture*}{Conjecture}

\theoremstyle{definition}
\newtheorem{definition}[theorem]{Definition}



\newcommand{\n}{\mathfrak{n}}

\newcommand{\OP}{\mathcal{O}}
\newcommand{\CP}{\mathcal{C}}

\newcommand{\Z}{\mathbb{Z}}

\newcommand{\R}{\mathbb{R}}

\newcommand{\comp}{{\operatorname*{Comp}}}

\DeclareMathOperator{\codim}{codim}
\newcommand{\Ptau}{P_\tau}

\pgfdeclarelayer{background}
\pgfsetlayers{background,main}

\newcommand{\convexpath}[2]{
[   
    create hullnodes/.code={
        \global\edef\namelist{#1}
        \foreach [count=\counter] \nodename in \namelist {
            \global\edef\numberofnodes{\counter}
            \node at (\nodename) [draw=none,name=hullnode\counter] {};
        }
        \node at (hullnode\numberofnodes) [name=hullnode0,draw=none] {};
        \pgfmathtruncatemacro\lastnumber{\numberofnodes+1}
        \node at (hullnode1) [name=hullnode\lastnumber,draw=none] {};
    },
    create hullnodes
]
($(hullnode1)!#2!-90:(hullnode0)$)
\foreach [
    evaluate=\currentnode as \previousnode using \currentnode-1,
    evaluate=\currentnode as \nextnode using \currentnode+1
    ] \currentnode in {1,...,\numberofnodes} {
  let
    \p1 = ($(hullnode\currentnode)!#2!-90:(hullnode\previousnode)$),
    \p2 = ($(hullnode\currentnode)!#2!90:(hullnode\nextnode)$),
    \p3 = ($(\p1) - (hullnode\currentnode)$),
    \n1 = {atan2(\y3,\x3)},
    \p4 = ($(\p2) - (hullnode\currentnode)$),
    \n2 = {atan2(\y4,\x4)},
    \n{delta} = {-Mod(\n1-\n2,360)}
  in 
    {-- (\p1) arc[start angle=\n1, delta angle=\n{delta}, radius=#2] -- (\p2)}
}
-- cycle
}

\newcommand\polymake{\texttt{polymake}\xspace}

\usepackage{enumitem}
\setlist[enumerate,1]{label={\alph*.}}
\setlist[enumerate,2]{label={(\roman*)}}

\begin{document}
\author[Ahmad, Fourier \& Joswig]{Ibrahim Ahmad \and Ghislain Fourier \and Michael Joswig}
\title[Order and chain polytopes of maximal ranked posets]{Order and chain polytopes of \\ maximal ranked posets}

\address{Chair of Algebra and Representation Theory, RWTH Aachen University, Pontdriesch 10-16, 52062 Aachen, Germany}
\address{Technische Universit\"at Berlin, Chair of Discrete Mathematics / Geometry, Berlin, \& MPI Mathematics in the Sciences, Leipzig, Germany} 
\email{ahmad@art.rwth-aachen.de}
\email{fourier@art.rwth-aachen.de}
\email{joswig@math.tu-berlin.de}

\maketitle

\begin{abstract}
  The order and chain polytopes, introduced by Richard P. Stanley (1986), form a pair of Ehrhart equivalent polytopes associated to a given finite poset.
  A conjecture by Takayuki Hibi and Nan Li from 2016 states that the $f$-vector of the chain polytope dominates the $f$-vector of the order polytope. 
  In this paper we prove a stronger form of that conjecture for a special class of posets.
  More precisely, we show that the $f$-vectors increase monotonically over an admissible family of chain-order polytopes for such posets.
\end{abstract}

\section{Introduction} 
For a given finite poset $P$ with $n$ elements, Stanley \cite{Sta86} introduced the \emph{order polytope}
\begin{equation}\label{eq:order-polytope}
  \OP(P) = \{ x \in \R^n_{\geq 0} \mid x_p \leq 1 \text{ and } x_p \leq x_q \text{ if } p \prec q \}
\end{equation}
and the \emph{chain polytope}
\begin{equation}\label{eq:chain-polytope}
  \CP(P) = \{ x \in \R^n_{\geq 0} \mid x_{p_1} + \dots + x_{p_s} \leq 1 \text{ for all } p_1 \prec p_2 \prec \dots \prec p_s\} \enspace ,
\end{equation}
where $\prec$ denotes the covering relation in $P$.
Both polytopes have full dimension $n$.
The primary result of \cite{Sta86} establishes that these two polytopes are Ehrhart-equivalent, meaning they share identical Ehrhart polynomials.
Specifically, the lattice point counts of $\OP(P)$ and $\CP(P)$ are equal.
In this work, we focus on exploring the combinatorial properties of order and chain polytopes.

For an $n$-dimensional polytope, the \emph{$f$-vector} $(f_0, f_1, \dots, f_{n-1})$ counts the faces per dimension.
Stanley \cite[Theorem 3.2]{Sta86} showed that $f_0(\OP(P))=f_0(\CP(P))$.
Hibi and Li \cite{HL16} proved $f_{n-1}(\OP(P))\leq f_{n-1}(\CP(P))$.
Moreover, they proved that $\OP(P)$ and $\CP(P)$ are affinely isomorphic if and only if the poset $P$ does not contain five pairwise distinct elements $a,b,c,d,e$ such that $d,e \prec c \prec a,b$.
Subsequently, Hibi et al.\ \cite{HLSS17} established $f_1(\OP(P)) \leq f_1(\CP(P))$.
These results lead to the natural conjecture (the \emph{Hibi--Li Conjecture}) that the $f$-vector of the chain polytopes always dominates the $f$-vector of the order polytope \cite[Conjecture 2.4]{HL16}.
Recently, this conjecture was proved by Freij-Hollanti and Lundström in a special case \cite{freijhollanti2024fvector}.

A particularly promising approach to addressing the Hibi–Li Conjecture arises from the following construction.
Given a poset $P$, we consider a partition $P = C \sqcup O$ with $\hat{0} \in C$ and $\hat{1} \in O$.
This partition defines the \emph{chain-order polytope} $\OP_{C,O}$, introduced in \cite{FF16-Mark} and later generalized in \cite{FFLP, FFP20}.
When the partition of $P$ into $C$ and $O$ is sufficiently well-structured, the chain-order polytopes $\OP_{C,O}(P)$ remain Ehrhart-equivalent to $\OP(P)$ and $\CP(P)$ \cite[Theorem 3.1]{FF16-Mark}.
By appropriately varying the partition, one obtains an entire family of polytopes that \enquote{interpolate} between the order and chain polytopes of the poset.
A stronger version of the Hibi–Li Conjecture posits that the $f$-vectors of these polytopes increase monotonically throughout this family \cite{FF16-Mark}.

Our main result (Theorem~\ref{Hibi-Li-Claim-Crucial}) confirms the strong Hibi--Li Conjecture for a certain class of posets.
A poset is \emph{ranked} if each maximal chain has the same length.
For a vector $\tau=(\tau_1,\tau_2,\dots,\tau_\ell)$ of positive integers, we define the ranked poset $P_\tau$ which has $\tau_i$ elements in rank $i$, and any two elements in distinct ranks are comparable.
All ranked posets with $\tau_i$ elements in rank $i$ are subposets of $P_\tau$.
Therefore, we call $P_\tau$ a \emph{maximal ranked poset}.
Fixing a parameter $k$ with $0\leq k\leq\ell$, we define $C$ as the set of elements of rank at most $k$ and $O$ as the elements of larger rank.
Then $\OP_{C,O}(P_\tau)=\OP(P_\tau)$ for $k=0$,  and  $\OP_{C,O}(P_\tau)=\CP(P_\tau)$ for $k=\ell$.
We show that, for any maximal ranked poset $P_\tau$, the $f$-vectors of the chain-order polytopes of $P_\tau$ monotonically increase with $k$.
In particular, we obtain an independent new proof of the Hibi--Li conjecture for maximal ranked posets.
The latter form a subclass of the posets studied by Freij-Hollanti and Lundström \cite{freijhollanti2024fvector}.
Our proof exploits combinatorial properties of posets to arrive at normal forms for faces of the chain-order polytopes.
In \cite[p.~12]{Sta86}, Stanley pointed out that the faces of the chain polytopes are difficult to describe, whereas face structure of the order polytopes is known.
We hope that our proof serves as an inspiration for understanding the face structure of more general classes of chain polytopes.

The paper is organized as follows.
In Section~\ref{sec-prel}, we introduce the objects of interest.
Section~\ref{sec-normal} is devoted to a normal form for the faces of chain-order polytopes.
The proof of the main result is contained in Section~\ref{sec-hibili}.
Algorithmic aspects and computational experiments are the focus of Section~\ref{sec-comp}.
Here combinatorial optimization enters the picture.
In fact, computing maximal cliques in the comparability graph of a poset yields double descriptions of order and chain polytopes.

\noindent
\textbf{Acknowledgment:}
The work of all authors is funded by the Deutsche Forschungsgemeinschaft (DFG, German Research Foundation) through \textit{Symbolic Tools in Mathematics and their Application} (TRR 195, project-ID 286237555).
GF was further supported by \textit{Sparsity and Singular Structures} (SFB 1481, project-ID 442047500).
MJ was further supported by The Berlin Mathematics Research Center MATH$^+$ (EXC-2046/1, project ID 390685689).

\section{Preliminaries}\label{sec-prel}
We briefly recall known facts on the order and chain polytopes.
Throughout the entire article, we solely consider finite posets. 
Let $P$ be such a poset (with $n$ elements).
The face lattice of $\OP(P)$ can be described in terms of partitions.
So consider a partition
\begin{equation*}
    \pi=\{T_i\mid 1\leq i\leq s\}
\end{equation*}
of $P$.
We say that $\pi$ is \textit{$P$-compatible} if the relation $\leq$ on $\pi$ defined as the transitive closure of
\begin{equation*}
    B\leq C\,\text{ if and only if }p\leq q\, \text{ for some $p\in B$ and $q\in C$}
\end{equation*}
is antisymmetric. 
In this case, $\leq$ is a partial order on $\pi$. 
Further, $\pi$ is \textit{connected} if its blocks are connected as induced subposets.
A $P$-compatible partition $\pi$ of $P$ gives rise to a \emph{quotient poset} $(P/\pi,\leq)$ where $(P/\pi,\leq)$ is the poset of blocks in $\pi$.
Furthermore, the quotient map
\[
  q:(P,\leq)\rightarrow (P/\pi,\leq) \,,\ x\mapsto B\text{, for $x\in B$}
\]
is order-preserving; i.e., whenever $x\leq y$ in $P$, we have $q(x)\leq q(y)$ in $P/\pi$. 

Equipped with these definitions, we can now state the combinatorial description of the order polytope.
To keep the notation terse, we extend $P$ by a minimal and a maximal element $\hat{P} = P \cup \{ \hat{0}, \hat{1}\}$.
The first result in this direction goes back to Geissinger \cite{Geissinger} and Stanley \cite{Sta86}; see also \cite{Peg18}.
\begin{theorem}\label{Theorem-Order-Face-Partition}
Let $P$ be a poset. 
Then every non-empty face $F\subseteq\OP(P)$ can be identified with a partition $\pi_F$ of $\hat{P}$ that is connected, $\hat{P}$-compatible such that $\hat{0}$ and $\hat{1}$ are in different blocks.
Further, the map
\[
  \OP(\hat{P}/\pi_F)\rightarrow F \,,\ (x_{B})_{B\in (\hat{P}/\pi_{F})}\mapsto(x_{q(p)})_{p\in \hat{P}}
\]
is an affine isomorphism.
In particular, we have $\dim(F)=|\pi_F|-2$.
\end{theorem}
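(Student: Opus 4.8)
The plan is to present $\OP(P)$ inside $\R^{\hat P}$ by fixing $x_{\hat 0}=0$ and $x_{\hat 1}=1$, so that every defining inequality takes the uniform shape $x_a\le x_b$ for a covering relation $a\prec b$ of $\hat P$; here the constraints $x_p\ge 0$ and $x_p\le 1$ turn into $x_{\hat 0}\le x_p$ and $x_p\le x_{\hat 1}$. In this presentation the facets of $\OP(P)$ are exactly the covering relations of $\hat P$. For a non-empty face $F$ I let $A(F)$ denote the set of those covering relations that are tight on all of $F$; by the standard face description of a polytope, $F=\{x\in\OP(P):x_a=x_b\text{ for all }(a,b)\in A(F)\}$. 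I then define $\pi_F$ to be the partition of $\hat P$ into the connected components of the graph on $\hat P$ whose edges are the pairs in $A(F)$.

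Three properties of $\pi_F$ then require checking. That $\hat 0$ and $\hat 1$ lie in different blocks is immediate, since every point of $F$ has $x_{\hat 0}=0\ne 1=x_{\hat 1}$ while the elements of a single block share one coordinate value. Connectedness as an induced subposet is built into the definition: each block is connected through covering relations of $\hat P$, and covering relations are comparabilities. The substantive point is $\hat P$-compatibility, i.e.\ antisymmetry of the induced relation on $\pi_F$, and I expect this to be the main obstacle. The engine is a short monotonicity lemma: if $x\in\OP(P)$ and $a\le b$ in $\hat P$ with $x_a=x_b$, then along any saturated chain from $a$ to $b$ the coordinate is constant, so every covering relation on that chain is tight. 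Consequently, whenever two comparable elements carry the same value on $F$, they already lie in the same block of $\pi_F$.

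To deduce compatibility, suppose two distinct blocks $B\ne C$ satisfy $B\le C$ and $C\le B$ in the transitive closure. Walking around the resulting cycle of direct comparabilities and using that coordinates are constant on blocks, monotonicity forces the common value of every block on the cycle to coincide. But then each direct step $D_j\le D_{j+1}$ is witnessed by comparable elements of equal value, so by the lemma these elements, and hence the blocks $D_j$ and $D_{j+1}$, agree; the cycle collapses to a single block, contradicting $B\ne C$. Thus $\le$ is a partial order on $\pi_F$, and since $\hat 0\le p\le \hat 1$ for all $p$, the block $[\hat 0]$ is the unique minimum and $[\hat 1]$ the unique maximum of the quotient poset $\hat P/\pi_F$.

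Finally I verify that the pullback map is an affine isomorphism onto $F$. Because $q$ is order-preserving, any order-preserving assignment $(x_B)_B$ on $\hat P/\pi_F$ with $x_{[\hat 0]}=0$ and $x_{[\hat 1]}=1$ pulls back to a point of $\OP(P)$ that is constant on blocks, hence satisfies every equality in $A(F)$ and so lies in $F$; this shows the map lands in $F$ and is affine. It is injective because the value on a block is read off from any representative, and surjective because any $y\in F$ is constant on blocks (the generators of $\pi_F$ are tight on $F$) and therefore descends to a point of $\OP(\hat P/\pi_F)$ that maps back to $y$. The inverse, evaluation at a representative, is again affine, so the map is an affine isomorphism; running the same formula on an arbitrary connected, $\hat P$-compatible partition separating $\hat 0$ and $\hat 1$ produces a face with that partition, which makes the assignment $F\mapsto\pi_F$ a bijection. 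Since $\OP(\hat P/\pi_F)$ has a unique minimum and maximum among its $|\pi_F|$ blocks, it has dimension $|\pi_F|-2$, yielding the dimension formula for $F$.
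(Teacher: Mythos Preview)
The paper does not supply its own proof of this theorem; it is stated as a known result with references to Geissinger, Stanley, and Pegel. Your argument is correct and is essentially the standard one found in those sources: embed $\OP(P)$ in $\R^{\hat P}$ so that every facet is a covering relation $x_a\le x_b$, read off the partition $\pi_F$ from the set of tight covering relations, and use the sandwich observation (equal endpoint values force equality along an entire saturated chain) to establish $\hat P$-compatibility and to show that constancy on blocks characterizes $F$.

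One small point worth tightening for a polished write-up: when you invoke the monotonicity lemma in the compatibility argument, make explicit that the equality $x_{d_j}=x_{d'_{j+1}}$ you derive holds for \emph{every} $x\in F$ (not just a single point), so that the covering relations along a saturated chain from $d_j$ to $d'_{j+1}$ genuinely belong to $A(F)$. You do this implicitly, but spelling it out removes any ambiguity. Similarly, you might clarify which convention for $\OP(\hat P/\pi_F)$ you are using (coordinates indexed by all blocks with $x_{[\hat 0]}=0$, $x_{[\hat 1]}=1$ imposed, versus the order polytope of the quotient with its extremal blocks removed); both give dimension $|\pi_F|-2$, but the theorem's map is written in the former convention.
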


Next we recall the chain-order polytopes, first introduced in \cite{FF16-Mark} and then generalized in \cite{FFLP, FFP20}.
They provide an Ehrhart-equivalent family of polytopes that interpolate between the order and chain polytope of a given poset.
This family will be crucial for our main result.
For a poset $P$ consider a partition $P=C\sqcup O$ such that $\hat{0}\in C$ and $\hat{1}\in O$.
We are going to refer to the elements of $C$ as the \textit{chain elements} and and the elements of $O$ as \textit{order elements}.
\begin{definition}\label{def:chain-order-polytope}
The \emph{chain-order polytope} $\OP_{C,O}(P)\subseteq\R^{n}$ is given by
\begin{enumerate}
%
\item{$x_{\hat{0}}=0$ and $x_{\hat{1}}=1$,}
\item{for each covering relation $p \prec q \in  O$ an inequality $x_p \leq x_q$,}
\item{for each $p \in  C$  an inequality $0\leq x_p$,}
\item{for each maximal chain $p_1 \prec  p_2 \prec \dots \prec  p_r \prec a$ with $p_i \in  C$, $a\in O$ and $r \geq  0$ an inequality
\begin{equation*}
  x_{p_{1}} +\dots +x_{p_{r}} \leq x_a \enspace.
\end{equation*}}
\end{enumerate}
\end{definition}

It is known that the inequalities (\ref{eq:order-polytope}) and (\ref{eq:chain-polytope}) of the order and chain polytopes, respectively, are facet defining.
In contrast the facet description of the chain-order polytopes are unknown in general; see \cite[Conjecture 8.2]{FFLP}.
However, in our scenario, where $P$ is ranked, the facets are known.
\begin{proposition}[{\cite[Proposition 8.6]{FFLP}}]\label{prop-tame-facet}
  Let $P$ a ranked poset with partition $P=C\sqcup O$.
  Then the inequality description for $\OP_{C,O}(P)$ in Definition~\ref{def:chain-order-polytope} is facet defining.
\end{proposition}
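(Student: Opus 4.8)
The plan is to use the standard facet criterion for a full-dimensional polytope: once we know $\dim\OP_{C,O}(P)=n$, a valid inequality is facet-defining precisely when its supporting hyperplane contains a point of $\OP_{C,O}(P)$ at which every \emph{other} defining inequality holds strictly. Such a point lies in the relative interior of the associated face, and since only the chosen inequality is tight there (beyond the ambient equalities $x_{\hat{0}}=0$, $x_{\hat{1}}=1$), the affine hull of the face is the full intersection of that hyperplane with the ambient affine space, so the face has dimension $n-1$. Thus the task splits into (a) establishing full dimensionality and (b) producing, for each inequality type in Definition~\ref{def:chain-order-polytope}, one point that is tight on it alone.

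For full dimensionality I would exploit the rank function $\rho$ of $\hat{P}$, normalized so that $\rho(\hat{0})=0$ and $\rho(\hat{1})=\ell+1$. First choose values on the order elements that strictly increase along every covering relation inside $O$; this is possible because those constraints alone describe the order polytope of the subposet induced on $O\cup\{\hat{0},\hat{1}\}$, which is full-dimensional and hence admits an interior point with $x_a>0$ for every relevant $a\in O$. Then assign to each chain element a value $\epsilon\,\rho(p)$ with $\epsilon>0$ small. For small $\epsilon$ every chain-sum inequality of type (4) acquires strict slack, since its right-hand side $x_a$ stays bounded away from $0$ while its left-hand side is $O(\epsilon)$, and the nonnegativity inequalities (3) are strict. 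This produces an interior point, so $\OP_{C,O}(P)$ is full-dimensional.

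For the individual facets I would perturb such an interior point toward the relevant hyperplane. For a cover inequality $x_p\le x_q$ with $p\prec q$ in $O$ (type (2)) or a nonnegativity inequality $0\le x_p$ with $p\in C$ (type (3)) one moves the single coordinate $x_p$ until equality is attained while keeping all rank gaps in $O$ strict and all chain coordinates tiny; lowering a chain coordinate only relaxes the type-(4) inequalities, and the remaining type-(2) gaps stay strict by construction, so the resulting point is tight on exactly one inequality. For a chain-sum inequality $x_{p_1}+\dots+x_{p_r}\le x_a$ (type (4)) I would fix a moderate positive value $x_a$, distribute the sum among $x_{p_1},\dots,x_{p_r}$ so that each summand is strictly positive (keeping (3) strict), set all \emph{other} chain coordinates to a common tiny value, and take the order coordinates strictly rank-increasing and otherwise generic; one then verifies that every competing chain-sum inequality retains strict slack.

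The main obstacle is exactly this last verification for type-(4) inequalities: distinct maximal $C$-chains may share elements or terminate at different order elements, so driving one chain sum up to $x_a$ risks simultaneously saturating a neighbouring inequality. This is where ranked-ness is indispensable. Because all maximal chains have equal length, the maximal $C$-chains feeding a fixed order element $a$ have a uniform length controlled by $\rho(a)$, which both restricts how such chains can overlap and guarantees enough free coordinates to perturb away any accidental coincidental equality. I expect the technical core of the argument to be this rank-based bookkeeping — or, equivalently, a direct exhibition of $n$ affinely independent polytope points on the hyperplane — confirming that the face cut out by a single chain-sum inequality is contained in no other facet and therefore has dimension $n-1$.
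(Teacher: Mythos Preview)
The paper does not prove this proposition: it is quoted from \cite[Proposition~8.6]{FFLP} and used as a black box, so there is no argument here to compare your proposal against.

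As to the proposal itself, the overall strategy---full dimensionality plus, for each listed inequality, a witness point tight on that inequality alone---is sound, and your constructions for types~(2) and~(3) can be made rigorous with a bit more care in choosing the reference interior point (e.g.\ for type~(2) you must ensure that raising $x_p$ to $x_q$ does not accidentally hit $x_{q'}$ for some other $q'\succ p$, which requires the interior point to be chosen so that $x_q$ is strictly below all such $x_{q'}$). The genuine gap is the one you already flag: for the chain-sum inequalities of type~(4) you never carry out the verification. Your appeal to rankedness is in the right direction, but the sentence ``guarantees enough free coordinates to perturb away any accidental coincidental equality'' is not yet an argument. Concretely, two distinct maximal $C$-chains terminating at the \emph{same} $a\in O$ have the same length and may overlap in all but one position; one must explain why loading the chosen chain to $x_a$ (say by putting positive mass only on its elements and a tiny $\epsilon$ on every other chain element) leaves every competing chain sum strictly below its target---including chains that reuse the same $C$-elements but terminate at a different $a'\in O$, which forces you to arrange $x_{a'}>x_a$ consistently with the type-(2) inequalities on $O$. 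Until that construction is written out and checked, the type-(4) case remains a plan rather than a proof.
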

The comparability graph of a poset $P$ is the undirected graph $\comp(P)=(P,E)$ with $\{p,q\}\in E$ if and only if $p$ and $q$ are comparable.
It follows from the description (\ref{eq:chain-polytope}) that the chain polytope $\CP(P)$ only depends on the graph $\comp(P)$.

\section{Face normal form for chain-order polytope}\label{sec-normal}
Each $k$-dimensional face of a $d$-dimensional polytope can be written as the intersection of $d-k$ facets.
However, in general, the choice of facets is not unique.
The purpose of a face normal form is to make such a selection.

From now on, we consider a maximal ranked poset $P_\tau$ where $\tau=(\tau_1,\tau_2,\dots,\tau_\ell)$ is a sequence of positive integers; here $\tau_i$ dictates the number of elements in rank $i$.
We abbreviate the notation for the order and chain polytopes by $\OP(\tau)$ and $\CP(\tau)$, respectively.
To describe the intermediate chain-order polytopes, we introduce the following concept.
\begin{definition}
  Let $\tau\in\Z_{> 0}^\ell$ be a tuple and $0\leq k\leq \ell$.
  Then we define \textit{$k$-decomposition} of $P_{\tau}$ as the decomposition $P_\tau=C\sqcup O$ with
  \begin{equation*}
    C=\bigcup\limits_{i=0}^{k}Y^{{i}},\hspace{1em}O=\bigcup\limits_{i=k+1}^{\ell+1}Y^{{i}},
  \end{equation*}
where $Y^{i}$ refers to the elements of rank $i$ and $Y^{0}=\{\hat{0}\}$ and $Y^{\ell+1}=\{\hat{1}\}$.
The decompositions for $k=0$ and $k=\ell$ are called \textit{trivial}, and the other $k$-decompositions are \textit{non-trivial}.
Throughout, we abbreviate $\OP_{C,O}(P_{\tau})$ as $\OP_{C,O}(\tau)$.
\end{definition}

To describe the face lattice of $\OP_{C,O}(\tau)$, we rephrase the facet description from Definition \ref{def:chain-order-polytope} for our class of posets $P_\tau$.
\begin{lemma}\label{Lemma-Chain-Order-Facet-Alternate}
    Let $\tau\in\Z_{> 0}^\ell$ and let $P_\tau=C\sqcup O$ be a $k$-decomposition. 
    Then the chain-order polytope $\OP_{C,O}(\tau)$is defined by the following facet-defining inequalities:
    \begin{enumerate}
        \item{$x_{\hat{0}}=0$ and $x_{\hat{1}}=1$};
        \item{for each $p \in  C$ the inequality $0\leq x_p$};
        \item{for each $a,b\in O$ with $a\prec b$ the \emph{order inequality} $x_a\leq x_b$};
        %
        \item{for any $p_i\in Y^{i}$ and $q_{k+1}\in Y^{k+1}$ the \emph{chain inequality} $x_{p_{1}} +\dots +x_{p_{k}} \leq x_{q_{k+1}}$\enspace .}
    \end{enumerate}
\end{lemma}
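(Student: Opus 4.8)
The plan is to recognise the four families of inequalities in the statement as the inequalities of Definition~\ref{def:chain-order-polytope}, specialised to the maximal ranked poset $P_\tau$, and then to read off the facet-defining property from Proposition~\ref{prop-tame-facet}. Since $P_\tau$ is ranked, Proposition~\ref{prop-tame-facet} already tells us that items (1)--(4) of Definition~\ref{def:chain-order-polytope} are facet defining; hence it suffices to match the two inequality systems family by family, after which the facet-defining claim is inherited verbatim. Two of the matchings are immediate: item (1) is literally the same in both descriptions, and the nonnegativity constraints $0\le x_p$ for $p\in C$ of Definition~\ref{def:chain-order-polytope}(3) are exactly item (2) here.

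The first substantive step is to determine the covering relations of $P_\tau$. Because any two elements lying in distinct ranks are comparable and every rank is nonempty (the entries of $\tau$ are positive), an element $p\in Y^i$ is covered by $q$ if and only if $q\in Y^{i+1}$; a larger rank gap would always be bridged by an intermediate element, so no such pair can be a covering relation. In particular $O=\bigcup_{i\ge k+1}Y^i$ is an order filter, and the covering relations $a\prec b$ with $a,b\in O$ are precisely the pairs $(a,b)$ with $a\in Y^i$, $b\in Y^{i+1}$ and $k+1\le i\le\ell$. This identifies the covering inequalities of Definition~\ref{def:chain-order-polytope}(2) with the order inequalities in item (3) of the lemma.

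The core of the argument is the analysis of the maximal chains $p_1\prec\dots\prec p_r\prec a$ of Definition~\ref{def:chain-order-polytope}(4), with $p_i\in C$ and $a\in O$. Since $a$ covers $p_r$, the description of the covering relations forces $\rk(a)=\rk(p_r)+1$; combining $\rk(p_r)\le k$ (as $p_r\in C$) with $\rk(a)\ge k+1$ (as $a\in O$) then pins down $\rk(p_r)=k$ and $a\in Y^{k+1}$. Being a saturated chain that is maximal downward in $C$, and using that consecutive ranks of $P_\tau$ are mutually comparable, the chain meets each of the ranks $0,1,\dots,k$ in exactly one element. Writing its rank-$i$ element as $p_i\in Y^i$ and discarding the rank-$0$ term via $x_{\hat 0}=0$ from item (1), the inequality $x_{p_1}+\dots+x_{p_r}\le x_a$ becomes $x_{p_1}+\dots+x_{p_k}\le x_a$ with $p_i\in Y^i$ and $a\in Y^{k+1}$. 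Conversely, any choice of one representative per rank $1,\dots,k$ together with an element of $Y^{k+1}$ assembles into such a maximal chain, so these inequalities are exactly the chain inequalities in item (4).

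I expect the only delicate point to be the bookkeeping around $\hat 0$ and the precise reading of \enquote{maximal chain}: one has to check that maximality forces the chain to climb through every rank up to $k$, and that the minimal element $\hat 0$ --- which necessarily heads each such chain in $P_\tau$ --- contributes nothing because $x_{\hat 0}=0$, so whether or not it is counted among the $p_i$ is immaterial. Once all four families are matched, the facet-defining property carries over directly from Proposition~\ref{prop-tame-facet}, and the proof is complete.
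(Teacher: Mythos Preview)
Your proposal is correct and follows essentially the same approach as the paper: identify the four families of inequalities with the corresponding families in Definition~\ref{def:chain-order-polytope} and then invoke Proposition~\ref{prop-tame-facet} for the facet-defining claim. Your argument is in fact more careful than the paper's terse proof---you explicitly analyse the covering relations and maximal chains of $P_\tau$ to justify the matching, whereas the paper asserts the correspondence in a single sentence (and somewhat confusingly attributes both the order and chain inequalities to the $r=0$ versus $r>0$ cases of a single clause of the definition).
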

\begin{proof}
    The first two types are the same as in Definition \ref{def:chain-order-polytope}.
    The last two types of inequalities in the lemma together form the third class of inequalities in the original definition.
    The order inequalities arise when $r=0$, whereas the chain inequalities appear when $r>0$. The latter occurs only if the minimum of the corresponding chain is exactly $\hat{0}$.
    Proposition \ref{prop-tame-facet} confirms that these inequalities are facet-defining.
\end{proof}

We will construct a normal form for the faces of $\OP_{C,O}(\tau)$ in several steps:
Proposition \ref{Proposition-Face-Partition} will deal with faces contained in order facets but not in chain facets.
In Propositions \ref{Proposition-Zero-Variable-Face} and \ref{Proposition-Chains-Codimension}, we will consider faces that are contained in chain facets but not in order facets.
The remaining case, where order and chain facets may intersect nontrivially, will be discussed in Theorem \ref{Theorem-ChainOrder-Normal}.

\begin{proposition}\label{Proposition-Face-Partition}
Consider a $k$-decomposition of $\Ptau$.
Let $F\subseteq \OP_{C,O}(\tau)$ be a face defined only by order inequalities. 
%
Then $F$ is given by a face partition of $O$.
\end{proposition}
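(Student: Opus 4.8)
The plan is to reduce the statement to the face description of ordinary order polytopes in Theorem~\ref{Theorem-Order-Face-Partition} by projecting away the chain coordinates. Regard $O$ as the subposet of $\hat{P}_\tau = P_\tau\cup\{\hat 0,\hat 1\}$ induced on the order elements: its maximum is $\hat 1$ (the unique element of $Y^{\ell+1}$) and its minimal elements are those of rank $k+1$. Adjoin a formal minimum $\hat 0_O$ below $Y^{k+1}$ and write $\hat O = O\cup\{\hat 0_O\}$ for the resulting bounded poset, so that $\OP(\hat O)$ denotes its order polytope with $x_{\hat 0_O}=0$ and $x_{\hat 1}=1$ pinned. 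Let
\[
  \rho\colon \OP_{C,O}(\tau)\longrightarrow \R^{O},\qquad (x_p)_p\mapsto (x_a)_{a\in O},
\]
be the coordinate projection onto the order variables.

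First I would identify the image of $\rho$ with $\OP(\hat O)$. The inclusion $\rho(\OP_{C,O}(\tau))\subseteq\OP(\hat O)$ is immediate from Lemma~\ref{Lemma-Chain-Order-Facet-Alternate}: the constraint $x_{\hat 1}=1$, the order inequalities $x_a\le x_b$ for $a\prec b$ in $O$, and the lower bounds $0\le x_a$ for $a\in Y^{k+1}$ (which follow from the chain inequalities together with $x_p\ge 0$ for the chain elements) are precisely the defining inequalities of $\OP(\hat O)$. Surjectivity is the crucial point: starting from any $y\in\OP(\hat O)$, set every chain coordinate (those of ranks $1,\dots,k$) equal to $0$; each chain inequality of Lemma~\ref{Lemma-Chain-Order-Facet-Alternate} then reduces to $0\le x_{q_{k+1}}$ and holds automatically, producing a point of $\OP_{C,O}(\tau)$ that projects to $y$.

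Next, because $F$ is cut out from $\OP_{C,O}(\tau)$ by turning some set $S$ of order inequalities into equalities, and these equalities constrain only order coordinates, I would show that $F=\rho^{-1}(G)$ for
\[
  G \;=\; \rho(F) \;=\; \{\,y\in\OP(\hat O) : y_a=y_b \text{ for all } (a,b)\in S\,\}.
\]
The inclusion $F\subseteq\rho^{-1}(G)$ is clear; conversely, any $x\in\OP_{C,O}(\tau)$ with $\rho(x)\in G$ satisfies the very equalities defining $F$ and hence lies in $F$. In particular $G$ is a nonempty face of $\OP(\hat O)$, being its intersection with the supporting hyperplanes $\{y_a=y_b\}$.

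Finally I would apply Theorem~\ref{Theorem-Order-Face-Partition} to $\hat O$: the face $G$ corresponds to a connected, $\hat O$-compatible partition $\pi_G$ with $\hat 0_O$ and $\hat 1$ in different blocks, which is the asserted face partition of $O$, and the identity $F=\rho^{-1}(G)$ exhibits $F$ as completely determined by $\pi_G$. The main obstacle is precisely this preimage identity $F=\rho^{-1}(G)$, i.e.\ checking that passing from $\OP(\hat O)$ up to the chain-order polytope introduces no hidden equalities among the order coordinates beyond those in $S$; this is exactly what surjectivity of $\rho$ guarantees, and the clean facet list of Lemma~\ref{Lemma-Chain-Order-Facet-Alternate}—available because $P_\tau$ is ranked—is what makes the argument transparent.
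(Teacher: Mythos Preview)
Your approach is essentially the same as the paper's: project onto the order coordinates, identify the image with the order polytope of $\{\hat 0\}\cup O$, and invoke Theorem~\ref{Theorem-Order-Face-Partition}. You are in fact more explicit than the paper about why the projection is surjective and why $F=\rho^{-1}(G)$.

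One small point is glossed over. You conclude that $\pi_G$ is ``the asserted face partition of $O$'', but Theorem~\ref{Theorem-Order-Face-Partition} only gives you a partition of $\hat O$ with $\hat 0_O$ and $\hat 1$ in distinct blocks; to obtain a partition of $O$ you need $\hat 0_O$ to be a \emph{singleton} block, i.e.\ that $G$ is not contained in any facet $\{y_a=0\}$ for $a\in Y^{k+1}$. The paper handles this by observing that since $F$ satisfies no chain equality it satisfies no equality $x_q=0$ for $q\in O$; in your setup the same follows because the all-ones vector lies in $G$ (it satisfies every order equality $y_a=y_b$), so no coordinate is forced to vanish. Adding this sentence completes the argument.
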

\begin{proof}
We are going to project downwards onto the order polytope $\OP(\Tilde{P})$,  where $\Tilde{P}$ is the induced subposet
%
\begin{equation*}
    \Tilde{P}=\{\hat{0}\}\cup O.
\end{equation*}
The latter is the poset restricted to the order part.
Now consider the projection
\[
  \phi:\OP_{C,O}(\tau)\rightarrow \OP(\Tilde{P}) \,,\ (x_p)_{p\in P_\tau}\mapsto (x_p)_{p\in O} \enspace .
\]

The image $\phi(F)$ is again a face of $\OP(\Tilde{P})$, and the equations satisfied are precisely the same as those for $F$. 
As $F$ does not satisfy any chain inequality, it follows that is does not satisfy $x_q=0$ for any $q\in O$.
This implies that $\phi(F)$ only satisfies equalities of the form $x_p=x_q$ with $p,q\in O$ for $p\leq q$, or $x_p=1$ for some $p\in Y^{{n}}$. 
Theorem \ref{Theorem-Order-Face-Partition} implies that $\phi(F)$ is given by a face partition of $\Tilde{P}$. 
However,  since not all $x\in\phi(F)$ satisfy $x_p=0$ for any $p\in O$, the element $\hat{0}$ is in a singleton block.
Thus, the face $F$ defined only by order inequalities, is entirely determined by a face partition of $O$.
\end{proof}

\begin{proposition}\label{prop-normal-form-order}
Consider a $k$-decomposition of $\Ptau$ and a face $F\subseteq\OP_{C,O}(\tau)$ given only by order inequalities and let $\varphi$ be the face partition of $O$ and let $\pi$ be the extended partition from $\varphi$ by adding singletons.
Then we have
\begin{equation*}
    \OP_{C,\varphi}(P_\tau/\pi)\cong F \enspace .
\end{equation*}
In particular, the codimension of $F$ is exactly $|O|-|\varphi|$.
\end{proposition}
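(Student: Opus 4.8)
The plan is to exhibit $F$ as the image of an explicit linear pullback from the quotient polytope and then to match the two facet descriptions inequality by inequality. First I would record that $\OP_{C,\varphi}(P_\tau/\pi)$ is well defined: by Proposition~\ref{Proposition-Face-Partition} the partition $\varphi$ is connected, $\Tilde{P}$-compatible, and has $\hat{0}$ in a singleton block, so adjoining the chain elements of $C$ as singletons keeps $\pi$ connected, $P_\tau$-compatible, and separates $\hat{0}$ from $\hat{1}$. Writing $q\colon P_\tau\to P_\tau/\pi$ for the quotient map and $[p]=q(p)$ for the block of $p$, define the affine map
$$\Psi\colon\R^{P_\tau/\pi}\to\R^{P_\tau},\qquad \Psi(y)_p=y_{[p]}\enspace.$$
Its linear part is injective since $q$ is surjective, so $\Psi$ is an affine embedding whose image is the subspace of vectors that are constant on the blocks of $\pi$. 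I will show that $\Psi$ restricts to a bijection $\OP_{C,\varphi}(P_\tau/\pi)\to F$.

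Next I would check the correspondence of the two facet descriptions from Lemma~\ref{Lemma-Chain-Order-Facet-Alternate}. The constraints $y_{[\hat 0]}=0$ and $y_{[\hat 1]}=1$ pull back to $x_{\hat 0}=0$ and $x_{\hat 1}=1$, because $\hat 0$ is a singleton and the block $[\hat 1]$ carries the value $1$; likewise each nonnegativity constraint $0\le y_{B}$ on a chain block pulls back to $0\le x_p$ with $p\in C$, since every chain element is a singleton of $\pi$. For the order type: an inequality $x_a\le x_b$ of $\OP_{C,O}(\tau)$ with $a,b$ in the same block of $\varphi$ becomes the defining equality $x_a=x_b$ of $F$ and holds automatically on block-constant vectors, while for $a,b$ in distinct blocks the relation $[a]<[b]$ holds in $P_\tau/\pi$ and the order inequalities of the quotient yield $y_{[a]}\le y_{[b]}$ by transitivity. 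Running the same dictionary in reverse, an order cover $B\prec B'$ in $P_\tau/\pi$ comes from a pair $a\in B$, $b\in B'$ with $a<b$ in $O$, so $y_B=x_a\le x_b=y_{B'}$ on $F$. This settles all order constraints and face equalities in both directions.

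The crux is the chain inequalities, and this is where I expect the single genuine obstacle, namely understanding covering relations in $P_\tau/\pi$. The key combinatorial input is maximal rankedness: if an order block $B\in\varphi$ covers the top chain element $p_k\in Y^k$ in the quotient, then $B\cap Y^{k+1}\neq\varnothing$. Indeed $p_k$ lies below all of $O$, so if $B$ avoided $Y^{k+1}$ one could interpolate an element $a\in Y^{k+1}$ with $p_k\prec a<b$ for some $b\in B$; the block $[a]$ would then satisfy $\{p_k\}<[a]\le B$, and $\Tilde{P}$-compatibility forces either $[a]=B$, contradicting $B\cap Y^{k+1}=\varnothing$, or $\{p_k\}<[a]<B$, contradicting that $B$ covers $p_k$. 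Consequently a defining chain inequality $y_{p_1}+\dots+y_{p_k}\le y_B$ of the quotient picks up an element $a\in B\cap Y^{k+1}$, and it pulls back exactly to the chain inequality $x_{p_1}+\dots+x_{p_k}\le x_a$ of $\OP_{C,O}(\tau)$; conversely each chain inequality of $\OP_{C,O}(\tau)$ indexed by $q_{k+1}\in Y^{k+1}$ is, after restricting to $F$, the pullback of the quotient chain inequality attached to the block $[q_{k+1}]$ (with inequalities for different $q_{k+1}$ in one block coinciding on $F$). Combining this with the previous paragraph yields both inclusions $\Psi\bigl(\OP_{C,\varphi}(P_\tau/\pi)\bigr)\subseteq F$ and $F\subseteq\Psi\bigl(\OP_{C,\varphi}(P_\tau/\pi)\bigr)$, since every point of $F$ is block-constant and hence of the form $\Psi(y)$ for the vector $y$ of its block values.

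Finally, the codimension follows by a dimension count once the isomorphism is established. Chain-order polytopes are full dimensional in their block coordinates after fixing the two values $x_{[\hat 0]}$ and $x_{[\hat 1]}$, so $\dim\OP_{C,\varphi}(P_\tau/\pi)=(|C|+|\varphi|)-2$, while $\dim\OP_{C,O}(\tau)=(|C|+|O|)-2$. Since $\dim F=\dim\OP_{C,\varphi}(P_\tau/\pi)$, we obtain
$$\codim F=\bigl(|C|+|O|-2\bigr)-\bigl(|C|+|\varphi|-2\bigr)=|O|-|\varphi|\enspace,$$
as claimed.
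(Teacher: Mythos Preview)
Your proof is correct and follows essentially the same approach as the paper: both use the block-pullback map $(x_{p'})_{p'\in P_\tau/\pi}\mapsto (x_{q(p)})_{p\in P_\tau}$ and finish with the identical dimension count $|C|+|\varphi|-2$ versus $|C|+|O|-2$. The paper simply invokes Theorem~\ref{Theorem-Order-Face-Partition} to assert that this map is an affine isomorphism, whereas you spell out the facet-by-facet correspondence from Lemma~\ref{Lemma-Chain-Order-Facet-Alternate}, including the argument that any order block covering $\{p_k\}$ must meet $Y^{k+1}$; this is more detail than the paper gives but not a different route.
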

Here \enquote{$\cong$} refers to affine isomorphy.
\begin{proof}
Checking that $\pi$ is a face partition will be left out as it is a straight-forward verification.
Then according to Theorem \ref{Theorem-Order-Face-Partition}, using the projection onto the blocks $q:P_\tau\rightarrow P_\tau/\pi$, we get the affine isomorphism
\[
   \OP_{C,\varphi}(P_\tau/\pi)\rightarrow F \,,\ (x_{p'})_{p'\in (\Ptau/\pi)}\mapsto (x_{q(p)})_{p\in \Ptau} \enspace .
\]

The dimension of $F$ is exactly $|\pi|-2$, where we account for the blocks containing $\hat{0}$ and $\hat{1}$.
%
However, $\hat{0}$ and all other elements of $C$ are in singleton blocks.
Hence
%
\begin{equation*}
    |\pi|-2=|C|+|\varphi|-2
\end{equation*}
%
where we account for the block in $\varphi$ containing $\hat{1}$ and $\hat{0}\in C$.
Since the chain-order polytope is of dimension $|O|+|C|-2$, the codimension equals
\begin{equation*}
    |O|+|C|-2-(|C|+|\varphi|-2)=|O|-|\varphi| \enspace . \qedhere
\end{equation*}
\end{proof}
Finally, we need to understand what the quotient poset $\Ptau/\pi$ looks like.
\begin{lemma}\label{Lemma-Quotient-Tuple}
    Fix a $k$-decomposition of $\Ptau$ for some $\tau\in\Z_{> 0}^\ell$. Let $F\subseteq\OP_{C,O}(\tau)$ be a face given only by order inequalities, and let $\pi$ be the extended partition on $\Ptau$.
    Then the quotient poset $\Ptau/\pi$ is of the form $P_{\tau'}$ for some other tuple $\tau'$.
\end{lemma}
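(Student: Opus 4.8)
The plan is to prove that $P_\tau/\pi$ is maximal ranked via the elementary characterization: \emph{a finite poset is of the form $P_{\tau'}$ if and only if its incomparability relation is transitive} (hence an equivalence relation). Indeed, if incomparability is an equivalence relation, then its classes are antichains, any two distinct classes are comparable in a single direction, and one checks (using transitivity of incomparability) that all elements of one class relate uniformly to all elements of another, so the induced relation on classes is a total order; listed from bottom to top, these classes are the ranks of a poset $P_{\tau'}$. Since $\hat{0}$ lies in a singleton block below every other block and $\hat{1}$ lies in the unique maximal block $B_{\hat{1}}$ above every other block, these occupy the extremal singleton ranks, as required. Thus the task reduces to showing that incomparability in $P_\tau/\pi$ is transitive.

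First I would record the shape of the blocks of $\pi$. As $\pi$ is compatible, each block $B$ is order-convex: if $p\le q\le r$ with $p,r\in B$ and $q$ in another block $B'$, then $B\le B'\le B$, so $B=B'$ by antisymmetry. Since in $P_\tau$ any two elements of distinct ranks are comparable, convexity forces $R(B):=\{\rk(x):x\in B\}$ to be a full interval $[r_B,s_B]$, and every interior rank is absorbed entirely, $Y^t\subseteq B$ for $r_B<t<s_B$ (choosing $p\in B$ of rank $r_B$ and $w\in B$ of rank $s_B$, we get $p<z<w$ for all $z\in Y^t$, so $z\in B$). Hence interior ranks are monopolized, so for distinct blocks the interval $[r_{B'},s_{B'}]$ is disjoint from the open interval $(r_B,s_B)$; as each $R(B)$ is an interval this yields the dichotomy
\[
 s_{B'}\le r_B \qquad\text{or}\qquad r_{B'}\ge s_B.
\]
I would also note that a single-rank (\emph{flat}) block is an antichain, so by connectedness of the blocks it must be a singleton.

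Next I would translate comparability. By definition of the quotient order, $B\le_1 B'$ in one step iff some element of $B$ lies below some element of $B'$, i.e.\ iff $r_B<s_{B'}$. Feeding the dichotomy into this rule shows that two distinct blocks are one-step comparable unless both are flat and lie in a common rank~$i$, in which case neither one-step relation holds. The substantial point, and the main obstacle, is to rule out that the transitive closure nevertheless relates two flat blocks $\{x\},\{y\}$ of a common rank~$i$. Here I would classify every block relative to~$i$: type~$H$ if $s_B>i$, type~$L$ if $r_B<i$, type~$F$ if $r_B=s_B=i$ (no block is both $H$ and $L$, as that would make $i$ an absorbed interior rank, contradicting the existence of the two flat blocks at rank~$i$). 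A direct check with $B\le_1 B'\iff r_B<s_{B'}$ shows that no one-step relation runs from a type-$H$ or type-$F$ block into a type-$L$ block. In any putative chain $\{x\}=C_0\le_1 C_1\le_1\cdots\le_1 C_p=\{y\}$, the first step forces $C_1$ to be of type~$H$ and the last step forces $C_{p-1}$ to be of type~$L$; taking the first type-$L$ block along the chain produces a forbidden step from a non-$L$ block into an $L$-block, a contradiction. Hence $\{x\}\not<\{y\}$, and symmetrically $\{y\}\not<\{x\}$.

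Combining these, two blocks of $\pi$ are incomparable in $P_\tau/\pi$ exactly when both are flat and share the same original rank. Since a flat block sits in a unique rank, this relation is transitive, hence an equivalence relation, and the characterization from the first paragraph gives $P_\tau/\pi\cong P_{\tau'}$. The only genuinely delicate step is the transitive-closure argument above; everything else is a direct consequence of order-convexity of the blocks together with the property that in $P_\tau$ all elements in distinct ranks are comparable.
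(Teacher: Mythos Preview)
Your proof is correct and takes a different route from the paper. The paper proceeds by induction on single edge contractions: collapsing one Hasse edge between ranks $i$ and $i{+}1$ produces $P_{\tau'}$ with $\tau'=(\tau_1,\dots,\tau_{i-1},\tau_i-1,1,\tau_{i+1}-1,\dots,\tau_\ell)$, and then iterates. You instead give a direct structural characterization, showing that incomparability in the quotient is transitive via the rank-interval analysis of blocks and the $H/L/F$ trichotomy for the transitive-closure step. Your approach is more conceptual and handles the whole partition at once; the paper's is more constructive, since it tells you explicitly how $\tau$ changes at each step.

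One minor slip worth fixing: the inference ``$[r_{B'},s_{B'}]$ disjoint from $(r_B,s_B)$ yields $s_{B'}\le r_B$ or $r_{B'}\ge s_B$'' fails when the integer open interval $(r_B,s_B)$ is empty, i.e.\ when $s_B\le r_B+1$; two distinct blocks could a priori both have rank-range $[i,i{+}1]$, and monopolized interiors say nothing here. The dichotomy is nevertheless true, but one should invoke compatibility directly: if $s_{B'}>r_B$ and $r_{B'}<s_B$ then $B\le_1 B'$ and $B'\le_1 B$, forcing $B=B'$. In fact this makes the dichotomy unnecessary for your argument, since the characterization ``two distinct blocks are one-step incomparable iff both are flat at the same rank'' follows immediately from $B\le_1 B'\iff r_B<s_{B'}$ alone.
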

\begin{proof}
    Instead of inspecting the entire partition, we will only consider what happens when turning an edge of the Hasse diagram into an equality. 
    The proof then follows by induction.
    Consider the edge $\{a,b\}$ for $a\in Y^{{i}}$, $b\in Y^{{i+1}}$ and $i< \ell$. Then for all $x\in Y^{{t}}$ with $t<i+1$ and $x\neq a$ we have $x<b$.
    Similarly, for all $y\in Y^{{s}}$ with $s>i$ and $y\neq b$, we get $y>a$.
    Thus, all blocks satisfy the relations of $P_{\tau'}$ with
    \begin{equation*}
    \tau'=(\tau,\dots, \tau_{i-1},\tau_{i}-1,1,\tau_{i+1}-1,\dots,\tau_{\ell})
    \end{equation*}
    and thus the quotient poset is just $P_{\tau'}$.
    If $i=\ell$, the same arguments yield that the quotient poset is isomorphic to $P_{\tau'}$ with
    \begin{equation*}
    \tau'=(\tau,\dots, \tau_{\ell}-1)
    \end{equation*}
    Now, if a zero entry should arise in the tuples above, then we just skip it.
\end{proof}

Now we will inspect the chain inequalities, where $C\neq\{\hat{0}\}$.

\begin{proposition}\label{Proposition-Zero-Variable-Face}
   %
   Let $F\subseteq\OP_{C,O}(\tau)$ be a face given by $x_b=0$ for some $b\in C\setminus\{\hat{0}\}$ from a $k$-decomposition $P_\tau=C \sqcup O$.
   For $b\in Y^i$ we have $F\cong\OP_{\Tilde{C},O}(\Tilde{\tau})$
   where
   \begin{equation*}
     \Tilde{\tau}=(\tau_1,\dots,\tau_{i-1},\tau_{i}-1,\tau_{i+1},\dots,\tau_{\ell})
   \end{equation*}
   and $\Tilde{C}=C\setminus\{b\}$.
   Again, zero entries in $\Tilde{\tau}$ are omitted.
\end{proposition}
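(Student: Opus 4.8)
The plan is to realise $F$ as the image of the affine projection that simply forgets the coordinate indexed by $b$. Concretely, I would introduce
\[
  \phi : \OP_{C,O}(\tau) \to \R^{\,n-1}, \quad (x_p)_{p\in P_\tau} \mapsto (x_p)_{p\in P_\tau\setminus\{b\}},
\]
and restrict it to $F$. Since every point of $F$ satisfies $x_b=0$, the restriction $\phi|_F$ is an affine bijection onto its image, so the whole content of the statement reduces to the set-theoretic identity $\phi(F)=\OP_{\Tilde{C},O}(\Tilde{\tau})$. I would prove this by comparing the two facet-defining systems supplied by Lemma~\ref{Lemma-Chain-Order-Facet-Alternate} and checking the two inclusions separately.

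For $\phi(F)\subseteq\OP_{\Tilde{C},O}(\Tilde{\tau})$, I would note that imposing $x_b=0$ on the system of $\OP_{C,O}(\tau)$ leaves the nonnegativity constraints for $\Tilde{C}=C\setminus\{b\}$ and all order inequalities untouched, since the order part $O$ is unchanged, and that a chain inequality of $\OP_{C,O}(\tau)$ not involving $b$ is verbatim a chain inequality of $\OP_{\Tilde{C},O}(\Tilde{\tau})$. For the reverse inclusion I would take $y\in\OP_{\Tilde{C},O}(\Tilde{\tau})$, extend it to a point $x$ by declaring $x_b=0$, and verify that $x$ satisfies every inequality of $\OP_{C,O}(\tau)$; the nonnegativity and order inequalities are immediate, and the only inequalities requiring work are the chain inequalities $x_{p_1}+\dots+x_{p_k}\le x_{q_{k+1}}$ whose rank-$i$ term is $x_b$.

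The crux, and the step I expect to be the main obstacle, is exactly these chain inequalities whose rank-$i$ element is $b$: after setting $x_b=0$ they degenerate to $\sum_{j\ne i}x_{p_j}\le x_{q_{k+1}}$, and I would split into two cases according to $\tau_i$. If $\tau_i>1$, rank $i$ survives in $\Tilde{\tau}$ and the degenerate inequality is \emph{redundant}, because choosing any surviving $p_i'\in Y^i\setminus\{b\}$ yields a genuine chain inequality of $\OP_{\Tilde{C},O}(\Tilde{\tau})$ which, together with the nonnegativity $x_{p_i'}\ge 0$, already forces $\sum_{j\ne i}x_{p_j}\le x_{q_{k+1}}$. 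If $\tau_i=1$, then $b$ is the unique element of its rank, rank $i$ disappears from $\Tilde{\tau}$, and the decomposition drops to a $(k-1)$-decomposition; after relabelling the ranks above $i$, the degenerate inequality is \emph{precisely} a chain inequality of $\OP_{\Tilde{C},O}(\Tilde{\tau})$ and so holds on the nose. Carefully bookkeeping this rank relabelling in the case $\tau_i=1$, and confirming that the two families of chain inequalities match up in each case, is where the argument needs the most care; everything else is the routine substitution described above.
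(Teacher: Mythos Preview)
Your proposal is correct and follows essentially the same approach as the paper's proof: both use the linear projection forgetting the $b$-coordinate and argue that the defining inequalities of $F$ match those of $\OP_{\Tilde{C},O}(\Tilde{\tau})$, with the key observation that chain inequalities through $b$ become redundant when $\tau_i>1$ (via any surviving $c\in Y^i\setminus\{b\}$) and become precisely the new chain inequalities when $\tau_i=1$. Your treatment is in fact more explicit than the paper's, which compresses the $\tau_i=1$ case into the phrase ``either way''.
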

\begin{proof}
  The chain-inequalities using $b$ are of the form $\sum_{j=1}^{k}x_{p_{j}}\leq x_{a}$  with $b=p_{i}$.
  For $x_b=0$ such an inequality simplifies to
  \begin{equation*}
    \sum\limits_{\substack{j=1\\j\neq i}}^{k}x_{p_{i}}\leq x_{a} \enspace .
  \end{equation*}
  If there exists $c\in Y^{{i}}\setminus\{b\}$, then these inequalities are redundant in view of
  \begin{equation*}
    x_{c}+\sum\limits_{\substack{j=1\\j\neq i}}^{k}x_{p_{i}}\leq x_{a} \enspace .
  \end{equation*}
  Either way, the defining inequalities of the face are exactly the defining inequalities of $\OP_{\Tilde{C}O}(\Tilde{\tau})$.
  Thus, the linear projection
  \[
    F\rightarrow\OP_{\Tilde{C},O}(\Tilde{\tau}) \,,\ (x_{p})_{p\in P_\tau}\mapsto (x_{p})_{p\in P_{\Tilde{\tau}}}
  \]
  is an affine isomorphism.
\end{proof}
\begin{proposition}\label{Proposition-Chains-Codimension}
    Let $F\subseteq\OP_{C,O}(\tau)$ be the face for a $k$-decomposition given by the equalities from the maximal chains
\begin{equation*}
    \mathbf{p}=\{\hat{0},p_1,\dots,p_k,a\},\hspace{1em}\mathbf{p'}=\{\hat{0},p'_1,\dots,p'_k,a'\}
\end{equation*}
where $a$ and $a'$ belong to the minimal antichain of $O$. 
We define $p_{k+1}:= a$ and $p'_{k+1}:= a'$. 
Let $I\subseteq\{1,\dots, k+1\}$ be the set of indices, where we have $p_i\neq p'_i$ for $i\in I$.
By projecting the coordinates of $\mathbf{p}'\setminus\mathbf{p}$ away, we get the affine isomorphism
\[
  \nu:F\rightarrow F' \,,\ (x_{p})_{p\in P_\tau}\mapsto (x_{p'})_{p'\in P_{\tau'}} \enspace ,
\]
where $\tau'=(\tau'_1,\dots,\tau'_\ell)$ with
\begin{equation*}
    \tau'_j=\begin{cases}
        \tau_j-1&\text{ if $j\in I$}\\
        \tau_j&\text{ otherwise}
    \end{cases}
\end{equation*}
and the face $F'$ is given by the equation of $\mathbf{p}$. Each coordinate that gets projected away yields us exactly one additional codimension.
\end{proposition}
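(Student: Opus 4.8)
The plan is to reduce the whole statement to one forced collection of coordinate equalities on $F$, after which the inverse of $\nu$ becomes transparent. Write the two defining chain equalities of $F$ as $(E)\colon x_{p_1}+\dots+x_{p_k}=x_a$ and $(E')\colon x_{p'_1}+\dots+x_{p'_k}=x_{a'}$, with the convention $p_{k+1}=a$, $p'_{k+1}=a'$. Since $p_i$ and $p'_i$ lie in the same rank $Y^i$, we have $p_i=p'_i$ exactly when $i\notin I$, so the coordinates discarded by $\nu$ are precisely the $x_{p'_i}$ with $i\in I$; this identifies $\mathbf{p}'\setminus\mathbf{p}$ with $\{p'_i\mid i\in I\}$. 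I would also record at the outset that whenever $\tau_j=1$ the rank $Y^j$ is a singleton, forcing $p_j=p'_j$ and hence $j\notin I$; thus $\tau'_j=\tau_j-1\geq1$ for $j\in I$ and no empty rank is created, so $P_{\tau'}$ is again a maximal ranked poset carrying the induced $k$-decomposition $P_{\tau'}=C'\sqcup O'$.

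The crucial step, and the one I expect to be the main obstacle, is to promote $(E)$ and $(E')$ to the stronger equalities $x_{p_i}=x_{p'_i}$ for \emph{every} $i\in I$. This does not follow from $(E)$ and $(E')$ in isolation, but from the other chain inequalities of Lemma~\ref{Lemma-Chain-Order-Facet-Alternate} via a swapping argument. Fix $i\in I$ with $i\leq k$. Because $P_\tau$ is maximal ranked, replacing $p_i$ by $p'_i$ inside $\mathbf{p}$ yields another maximal chain, whose chain inequality is $\sum_{j\neq i}x_{p_j}+x_{p'_i}\leq x_a$; subtracting $(E)$ gives $x_{p'_i}\leq x_{p_i}$. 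The symmetric swap inside $\mathbf{p}'$, combined with $(E')$, gives $x_{p_i}\leq x_{p'_i}$, hence equality. For the top index $k+1\in I$ (the case $a\neq a'$), the chain $p_1\prec\dots\prec p_k\prec a'$ gives $x_a=\sum_i x_{p_i}\leq x_{a'}$ while $p'_1\prec\dots\prec p'_k\prec a$ gives $x_{a'}\leq x_a$, so $x_a=x_{a'}$. Thus on $F$ each discarded coordinate is pinned to its unprimed counterpart.

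With this, injectivity of $\nu$ is immediate: its inverse reinstates $x_{p'_i}:=x_{p_i}$ for $i\in I$. To see $\nu(F)\subseteq F'$ and surjectivity, I would use that, by Proposition~\ref{prop-tame-facet} and Lemma~\ref{Lemma-Chain-Order-Facet-Alternate}, the facets of $\OP_{C',O'}(\tau')$ are exactly those facets of $\OP_{C,O}(\tau)$ that do not mention a removed element, while the equation of $\mathbf{p}$ persists. The forward inclusion is then clear. Conversely, given $y\in F'$, extend it by $x_{p'_i}:=x_{p_i}$; any defining inequality of $\OP_{C,O}(\tau)$ involving some $p'_i$ collapses under this substitution to a defining inequality of the smaller polytope (a nonnegativity, an order inequality, or a chain inequality ending at $a$ in place of $a'$), which holds on $F'$, and both $(E)$ and $(E')$ reduce to the surviving equation of $\mathbf{p}$. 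Hence the extension lies in $F$. Since $\nu$ and its inverse are restrictions of affine maps (forgetting, respectively copying, coordinates), $\nu$ is an affine isomorphism.

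For the codimension bookkeeping, $\nu$ being an isomorphism gives $\dim F=\dim F'$, while $P_{\tau'}$ has exactly $|I|$ fewer elements than $P_\tau$, so $\dim\OP_{C',O'}(\tau')=\dim\OP_{C,O}(\tau)-|I|$. Subtracting yields $\codim F=\codim F'+|I|$, which is the assertion that each projected coordinate contributes one unit of codimension. Alternatively, one may run the entire argument one index of $I$ at a time; this makes the "one coordinate, one codimension" statement manifest and matches the inductive style of the preceding lemmas, at the cost of re-deriving the pinning equality at each step.
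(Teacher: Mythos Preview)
Your proof is correct and follows essentially the same approach as the paper: both arguments hinge on the swapping trick of replacing $p_i$ by $p'_i$ (and vice versa) inside the two chains to force $x_{p_i}=x_{p'_i}$ for every $i\in I$, and then project away the redundant coordinates. Your write-up is slightly more detailed than the paper's on the bijectivity of $\nu$ and on why no empty rank arises in $\tau'$, but the underlying argument is the same.
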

\begin{proof}
Fix an $i\in I$ with $i<k+1$. Then we can form the two maximal chains
\begin{equation*}
    \mathbf{q}=(\mathbf{p}\setminus\{p_i\})\cup\{p'_i\},\hspace{1em}\mathbf{q}'=(\mathbf{p}'\setminus\{p'_i\})\cup\{p_i\}
\end{equation*}
Then, we get the inequalities
\[
  x_a+x_{p'_{i}}-x_{p_{i}}=\sum\limits_{q\in\mathbf{q}\cap C}x_q\leq x_a \quad \text{and} \quad  x_{a'}+x_{p_{i}}-x_{p'_{i}}=\sum\limits_{q\in\mathbf{q}'\cap C}x_q\leq x_{a'} \enspace .
\]
In particular, we get $x_{p'_{i}}\leq x_{p_{i}}$ as well as $x_{p_{i}}\leq x_{p'_{i}}$. 
That is, we have $x_{p_{i}}=x_{p'_{i}}$. 
If we also have $a\neq a'$, then using the chains
\begin{equation*}
    \mathbf{r}=(\mathbf{p}\setminus\{a\})\cup\{a'\},\hspace{1em}\mathbf{r}'=(\mathbf{p}'\setminus\{a'\})\cup\{a\}
\end{equation*}
we obtain
\[
  x_a=\sum\limits_{q\in\mathbf{p}\cap C}x_q=\sum\limits_{q\in\mathbf{r}\cap C}x_q\leq x_{a'} \quad \text{and} \quad  x_{a'}=\sum\limits_{q\in\mathbf{p}'\cap C}x_q=\sum\limits_{q\in\mathbf{r}'\cap C}x_q\leq x_{a} \enspace .
\]
We infer $x_a=x_{a'}$.
So the face $F$ satisfies the equation given by $\mathbf{p}$ and the equations
\begin{equation}\label{eq-diff-chains}
    x_{p_{i}}=x_{p'_{i}}\text{ for all $i\in I$} \enspace .
\end{equation}
Further, the subset of $\OP_{C,O}(\tau)$ given by the equation by $\mathbf{p}$ and the equalities \eqref{eq-diff-chains} satisfies the equation given by $\mathbf{p}'$, i.e. the equations of \eqref{eq-diff-chains} and $\mathbf{p}$ satisfy $F$. 
We can thus project the coordinates from $\mathbf{p}'\setminus\mathbf{p}$ away and get the claimed affine isomorphism.
\end{proof}

We can now establish a normal form for the faces of the chain-order polytope.

\begin{theorem}\label{Theorem-ChainOrder-Normal}
  Consider a $k$-decomposition $P_\tau=C\sqcup O$.
  Any face $F\subseteq\OP_{C,O}(\tau)$ is uniquely given by:
\begin{enumerate}
\item{a face partition of $O$, which we call $\pi$};
\item{sets $F_{0,i}\subseteq Y^i$ for $1\leq i\leq k$ (indicating those variables which are zero for all points in $F$);}
  \item\label{thm:chainorder-chain-cond}{
      sets $F_{\mathrm{eq},i}$ for $1\leq i\leq k+1$ (indicating the tight chain inequalities) satisfying
      \begin{enumerate}
      \item{
          $F_{\mathrm{eq},i}\subseteq Y^{i}\setminus F_{0,i}$, for all $1\leq i\leq k$,
        }
      \item{
          if $Y^{{i}}\setminus F_{0,i}\neq\emptyset$, then $\emptyset\neq F_{\mathrm{eq},i}$, for all $1\leq i\leq k$,
        }
      \item{
          $F_{\mathrm{eq},k+1}\subseteq Y^{k+1}$,
        }
      \item{
          $F_{\mathrm{eq},k+1}$ only contains elements of $Y^{k+1}$ that belong to singleton blocks of the face partition $\pi$.
        }
      \end{enumerate}
    }
  \end{enumerate}
\end{theorem}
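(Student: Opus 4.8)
The plan is to read off the three pieces of data from the facets of $\OP_{C,O}(\tau)$ that are tight on $F$, and to organize these into a sequence of reductions governed by Propositions~\ref{Proposition-Face-Partition}, \ref{prop-normal-form-order}, \ref{Proposition-Zero-Variable-Face} and \ref{Proposition-Chains-Codimension}. By Lemma~\ref{Lemma-Chain-Order-Facet-Alternate} every facet is of exactly one of three kinds: a nonnegativity facet $x_p=0$ with $p\in C$, an order facet $x_a=x_b$ with $a,b\in O$, or a chain facet $\sum_{i=1}^k x_{p_i}=x_{q_{k+1}}$. First I would set $F_{0,i}=\{p\in Y^i : x_p=0 \text{ on } F\}$ for $1\le i\le k$, collect the order equalities satisfied on $F$ into a partition $\pi$ of $O$, and let $F_{\mathrm{eq},i}$ record, rank by rank, the chain elements occurring in the chain equalities tight on $F$. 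The theorem then amounts to showing that this assignment is well defined (independent of the chosen facet presentation of $F$), that it lands in the data satisfying conditions (a)--(d), and that it is invertible.

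For well-definedness and the verification of (a)--(d) I would process the tight facets in a fixed order, using the three propositions as reduction steps. Iterating Proposition~\ref{Proposition-Zero-Variable-Face} over the elements of $\bigcup_i F_{0,i}$ identifies $F$ with a face of a smaller chain-order polytope $\OP_{\tilde{C},O}(\tilde{\tau})$ in which no chain coordinate vanishes; this both produces the sets $F_{0,i}$ and yields (a), since every surviving chain coordinate is now nonzero. Next, Proposition~\ref{Proposition-Chains-Codimension} shows that any two tight chains force the equality of their differing coordinates and that the redundant coordinates may be projected away, one codimension at a time; collapsing the whole family of tight chains onto a single reference chain produces the sets $F_{\mathrm{eq},i}$, and, because a tight chain meets every surviving non-zeroed rank $1\le i\le k$, gives condition (b). Finally, the remaining order equalities among the surviving $O$-coordinates are handled by Proposition~\ref{Proposition-Face-Partition} together with Proposition~\ref{prop-normal-form-order}, which certify that they are encoded by a genuine face partition $\pi$; condition (c) is immediate, and condition (d) records the normal-form convention that an equality between two rank-$(k+1)$ coordinates is always attributed to the chain data rather than to $\pi$, so that chain tops must sit in singleton blocks.

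For the converse I would run the same three reductions in reverse: starting from data satisfying (a)--(d), Propositions~\ref{Proposition-Zero-Variable-Face}, \ref{Proposition-Chains-Codimension} and \ref{prop-normal-form-order} reconstruct a face whose tight facets are exactly the prescribed ones, so the two assignments are mutually inverse and the correspondence is a bijection. The main obstacle is the interplay of chain facets and order facets flagged before the statement, and it enters in two places. First, I must show that the three reduction operations can be carried out independently and that their codimensions add up, i.e.\ that zeroing coordinates, collapsing tight chains, and merging $O$-blocks impose no hidden relations on one another; this is what makes the data non-redundant, hence unique. Second, and this is the genuinely delicate point, I must pin down the canonical attribution of a derived equality $x_a=x_{a'}$ between two elements $a,a'\in Y^{k+1}$: such an equality can be forced either by two tight chains with different tops or by a merge in the face partition of $O$, and I expect the heart of the proof to be verifying that condition (d) resolves this ambiguity consistently with the connectivity and compatibility requirements on $\pi$ inherited from Theorem~\ref{Theorem-Order-Face-Partition}.
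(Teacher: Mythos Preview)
Your plan is essentially the paper's own: decompose the tight facets by type and invoke Propositions~\ref{Proposition-Face-Partition}, \ref{prop-normal-form-order}, \ref{Proposition-Zero-Variable-Face}, \ref{Proposition-Chains-Codimension} as reduction steps. The only structural difference is the order of reductions. The paper processes the \emph{order} equalities first, passes to the quotient $\OP_{C,\varphi}(P_\tau/\pi)$ via Proposition~\ref{prop-normal-form-order} and Lemma~\ref{Lemma-Quotient-Tuple}, then peels off zeros, and only then handles chains. In that order, condition~(d) falls out for free: in the quotient chain--order polytope the chain facets end at the minimal rank of $O/\pi$, and those minimal blocks are exactly the singleton blocks of $Y^{k+1}$.

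Your reading of~(d) is slightly off, and it is precisely the reversed order that causes it. Condition~(d) is not a bookkeeping convention for attributing a derived equality $x_a=x_{a'}$ between two rank-$(k+1)$ elements; two such elements can sit in the same block of $\pi$ only through a common higher element, so that equality \emph{is} recorded in $\pi$ when it occurs. Rather, (d) expresses that a chain top cannot be merged \emph{upward}: if $a\in Y^{k+1}$ lies in a non-singleton block of $\pi$, that block contains some $b\in Y^{k+2}$, and then for any other $a'\in Y^{k+1}$ the inequalities $\sum x_{p_i}\le x_{a'}\le x_b=x_a$ force every rank-$(k+1)$ element to be a chain top and to lie in the same block, leaving nothing to record in $F_{\mathrm{eq},k+1}$. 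If you keep your zero--chain--order order you will have to prove exactly this implication by hand; switching to the paper's order makes it automatic.
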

\begin{proof}
The face partition results from Proposition \ref{Proposition-Face-Partition}.

We now fix the set of variables equal to $0$, which we call $F_{0,i}$ for all $1\leq i\leq n$.
These can be projected away according to Proposition \ref{Proposition-Zero-Variable-Face}.
Recall that the variables correspond to the elements of the poset $P$.
If we now want to add chains, then each chain has to contain an element $y\in Y^{{i}}\setminus F_{0,i}$ for all $1\leq i\leq k$ whenever $Y^{{i}}\setminus F_{0,i}\neq\emptyset$.
Otherwise, there is no choice we can make for $Y^{{i}}$.

Then, all chains have to end in the minimal elements of $\Ptau/\pi$ (rigorously speaking, we must put the remaining elements in singleton blocks to form the quotient). 
These minimal elements are the singletons blocks of $Y^{{k+1}}$ if they exist. 
If all elements of $Y^{{k+1}}$ are in blocks containing elements greater than them, and thus in the same block, there is also no choice for us to make.
\end{proof}

With the notation from Theorem \ref{Theorem-ChainOrder-Normal}, and employing Propositions \ref{prop-normal-form-order}, \ref{Proposition-Zero-Variable-Face} and \ref{Proposition-Chains-Codimension}, the codimension of the face $F$ can be computed as follows.
If there are no tight chain inequalities from \ref{thm:chainorder-chain-cond}, then
\begin{equation}\label{eq:codim:no-tight-chain}
  \codim F = |O|-|\pi|+\sum\limits_{i=1}^{n}|F_{0,i}| \enspace .
\end{equation}
Otherwise, we have
\begin{equation}\label{eq:codim:other}
  \codim F = 1+\sum\limits_{i\in I}\left(|F_{\mathrm{eq},i}|-1\right)+|O|-|\pi|+\sum\limits_{i=1}^{n}|F_{0,i}| \enspace,
\end{equation}
where $I\subseteq\{1,\dots,k+1\}$ is the set of indices for which $F_{\mathrm{eq},i}\neq\emptyset$.
From the special case $P_{\tau}=P_{\tau}\sqcup\emptyset$, we get a face description of the chain polytope.
\begin{corollary}\label{cor-chain-normal}
Let $\tau=(\tau_1,\dots,\tau_\ell)\in\Z_{> 0}^\ell$ and $F\subseteq\CP(\tau)$ be a face, then it is uniquely given by:
\begin{enumerate}
    \item{
        Sets $F_{0,i}\subseteq Y^i$ for $1\leq i\leq \ell$ (indicating the order inequalities tight at $F$);}
      \item\label{thm:chain-chain-cond}{
          sets $F_{\mathrm{eq},i}$ for $1\leq i\leq k+1$ (indicating the tight chain inequalities) satisfying
          \begin{enumerate}
            \item{
                $F_{\mathrm{eq},i}\subseteq Y^{i}\setminus F_{0,i}$, for all $1\leq i\leq \ell$,
            }
            \item{
                if $Y^{{i}}\setminus F_{0,i}\neq\emptyset$, then $\emptyset\neq F_{\mathrm{eq},i}$, for all $1\leq i\leq \ell$.
            }
        \end{enumerate}
    }
  \end{enumerate}
\end{corollary}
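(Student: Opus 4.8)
The plan is to derive Corollary~\ref{cor-chain-normal} as the specialization of Theorem~\ref{Theorem-ChainOrder-Normal} to the trivial decomposition with $k=\ell$, for which $C=\{\hat 0\}\cup Y^{1}\cup\dots\cup Y^{\ell}$ and $O=\{\hat 1\}$, so that $\OP_{C,O}(\tau)=\CP(\tau)$. First I would record, using Lemma~\ref{Lemma-Chain-Order-Facet-Alternate}, that when $k=\ell$ the chain inequalities of its item~(4) with $q_{k+1}=\hat 1$ and the equality $x_{\hat 1}=1$ become exactly the maximal-chain inequalities $x_{p_1}+\dots+x_{p_\ell}\leq 1$ defining $\CP(\tau)$, while the order inequalities of item~(3) disappear because $O=\{\hat 1\}$ contains no comparable pair of proper elements. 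Hence the facet description of $\CP(\tau)$ is literally the $k=\ell$ instance of $\OP_{C,O}(\tau)$; the two polytopes coincide and so do their face lattices.

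Next I would feed this decomposition into Theorem~\ref{Theorem-ChainOrder-Normal} and read off which parts of the normal form survive. Since $O=\{\hat 1\}$ is a single element, the only face partition $\pi$ of $O$ is the trivial one with block $\{\hat 1\}$; thus the theorem's item~(1) carries no information and is dropped. The zero sets $F_{0,i}\subseteq Y^{i}$ of the theorem's item~(2) now range over $1\leq i\leq\ell$ and give item~(1) of the corollary, and the tight-chain sets $F_{\mathrm{eq},i}$ together with conditions (a) and (b) of the theorem's item~(3) are inherited verbatim (with $k=\ell$) to give item~(2) of the corollary. Conditions (c) and (d) collapse: condition~(c) reduces to $F_{\mathrm{eq},\ell+1}\subseteq\{\hat 1\}$, which only records whether some maximal chain is tight, and condition~(d) is automatic because $\hat 1$ lies in a singleton block of the trivial partition. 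Uniqueness of the encoding is inherited directly from the uniqueness asserted in Theorem~\ref{Theorem-ChainOrder-Normal}.

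The only point needing genuine care---and what I expect to be the main obstacle---is the treatment of the top rank $\ell+1$. One must verify that the family of tight maximal-chain equalities $x_{p_1}+\dots+x_{p_\ell}=1$ of $\CP(\tau)$ is faithfully captured by the sets $F_{\mathrm{eq},i}$: by Proposition~\ref{Proposition-Chains-Codimension}, any two tight maximal chains differing at rank $i$ force equality of the corresponding rank-$i$ coordinates, so $F_{\mathrm{eq},i}$ consistently records the interchangeable rank-$i$ elements across the whole tight family, and every admissible choice subject to (a),(b) arises from an actual tight-chain family. Once this is in place, the codimension of $F$ is obtained from~\eqref{eq:codim:other} specialized to $k=\ell$ and $|\pi|=1$.
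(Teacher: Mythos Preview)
Your proposal is correct and follows exactly the approach the paper uses: the corollary is stated immediately after the sentence ``From the special case $P_{\tau}=P_{\tau}\sqcup\emptyset$, we get a face description of the chain polytope,'' i.e., it is simply Theorem~\ref{Theorem-ChainOrder-Normal} specialized to the $\ell$-decomposition, where $O=\{\hat 1\}$ forces the face partition $\pi$ to be trivial and conditions~(c),(d) to collapse. Your explanation of why items~(1) and~(3)(c),(d) of the theorem become vacuous, and why the remaining data reduce to the sets $F_{0,i}$ and $F_{\mathrm{eq},i}$ with conditions~(a),(b), is precisely the intended argument.
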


The codimension formulae \eqref{eq:codim:no-tight-chain},\eqref{eq:codim:other} yield the following.
Either no chain inequalities are tight at $F$.
Then
\begin{equation*}
  \codim F = \sum\limits_{i=1}^{n}|F_{0,i}| \enspace .
\end{equation*}
Or at least one chain inequality is tight, in which case
\begin{equation*}
  \codim F = 1+\sum\limits_{i\in I}\left(|F_{\mathrm{eq},i}|-1\right)+\sum\limits_{i=1}^{n}|F_{0,i}| \enspace ,
\end{equation*}
where $I\subseteq\{1,\dots,k+1\}$ is the set of indices for which $F_{\mathrm{eq},i}\neq\emptyset$.

\begin{figure}
    \centering
    \adjustbox{scale=0.7}{
    \begin{tikzcd}[ampersand replacement=\&,execute at end picture={
        \draw [red]
            ($(5-3)+(-3, +1.8em)$) -- ($(5-3)+(3, +1.8em)$);
        \fill[red,opacity=0.3] \convexpath{8-3,7-2,6-2,5-3,4-1,5-3,6-2,7-2,8-3}{8pt};
        \fill[red,opacity=0.3] \convexpath{8-3,7-2,6-2,5-3,4-3,5-3,6-2,7-2,8-3}{8pt};
        \fill[blue,opacity=0.3]  (6-4) circle (.265);
        \fill[green,opacity=0.3] \convexpath{4-2,3-2,3-4}{8pt};
        }]
    	\&\&\&\& |[alias=1-3]|{\hat{1}} \\
    	\& {\tau_6=3} \& |[alias=2-1]|\bullet \&\& |[alias=2-3]|\bullet \& {} \& |[alias=2-5]|\bullet \\
    	{O} \& {\tau_5=2} \&\& |[alias=3-2]|\bullet \&\& |[alias=3-4]|\bullet \\
    	\& {\tau_4=4} \& |[alias=4-1]|\bullet \& |[alias=4-2]|\bullet \&\& |[alias=4-3]|\bullet \& |[alias=4-5]|\bullet \\
    	\& {\tau_3=1} \&\&\& |[alias=5-3]|\bullet \\
    	{C} \& {\tau_2=2} \&\& |[alias=6-2]|\bullet \&\& |[alias=6-4]|\bullet \\
    	\& {\tau_1=5} \& |[alias=7-1]|\bullet \& |[alias=7-2]|\bullet \& |[alias=7-3]|\bullet \& |[alias=7-4]|\bullet \& |[alias=7-5]|\bullet \\
    	\&\&\&\& |[alias=8-3]|{\hat{0}}
    	\arrow[no head, from=2-3, to=1-5]
    	\arrow[no head, from=2-5, to=1-5]
    	\arrow[no head, from=2-7, to=1-5]
    	\arrow[no head, from=7-3, to=6-4]
    	\arrow[no head, from=7-4, to=6-4]
    	\arrow[no head, from=7-5, to=6-4]
    	\arrow[no head, from=7-6, to=6-4]
    	\arrow[no head, from=7-7, to=6-4]
    	\arrow[no head, from=7-3, to=6-6]
    	\arrow[no head, from=7-4, to=6-6]
    	\arrow[no head, from=7-5, to=6-6]
    	\arrow[no head, from=7-6, to=6-6]
    	\arrow[no head, from=7-7, to=6-6]
    	\arrow[no head, from=8-5, to=7-3]
    	\arrow[no head, from=8-5, to=7-4]
    	\arrow[no head, from=8-5, to=7-5]
    	\arrow[no head, from=8-5, to=7-6]
    	\arrow[no head, from=8-5, to=7-7]
    	\arrow[no head, from=6-4, to=5-5]
    	\arrow[no head, from=6-6, to=5-5]
    	\arrow[no head, from=5-5, to=4-3]
    	\arrow[no head, from=5-5, to=4-4]
    	\arrow[no head, from=5-5, to=4-6]
    	\arrow[no head, from=5-5, to=4-7]
    	\arrow[no head, from=4-3, to=3-4]
    	\arrow[no head, from=4-4, to=3-4]
    	\arrow[no head, from=4-6, to=3-4]
    	\arrow[no head, from=4-7, to=3-4]
    	\arrow[no head, from=4-3, to=3-6]
    	\arrow[no head, from=4-4, to=3-6]
    	\arrow[no head, from=4-6, to=3-6]
    	\arrow[no head, from=4-7, to=3-6]
    	\arrow[no head, from=3-4, to=2-3]
    	\arrow[no head, from=3-4, to=2-5]
    	\arrow[no head, from=3-4, to=2-7]
    	\arrow[no head, from=3-6, to=2-3]
    	\arrow[no head, from=3-6, to=2-5]
    	\arrow[no head, from=3-6, to=2-7]
    \end{tikzcd}
    }
    \caption{
        Normal form of a face of $\OP_{C,O}((5,2,1,4,2,3))$ with $k=3$ of codimension $5$.
        The elements in red belong to $F_{\mathrm{eq},i}$ for respective $i$, the element in blue belongs to $F_{0,2}$ and the green block is a block of cardinality $3$ in a face partition of $O\cup\{\hat{1}\}$. Elements in $O$ that are not marked are in singletons.
    }
    \label{ex:running}
\end{figure}

\section{Proof of the main theorem}\label{sec-hibili}
In this section we use Theorem \ref{Theorem-ChainOrder-Normal} to obtain our main result:

\begin{theorem}\label{Hibi-Li-Claim-Crucial}
  Let $\tau\in \Z_{> 0}^\ell$ and let $0\leq k\leq n-1$.
  We consider the $k$-decomposition $P_\tau=C\sqcup O$ and the $(k+1)$-decomposition $P_\tau=C'\sqcup O'$.

  Then, there exists an injective map from the set of faces of codimension $i$ of $\OP_{C,O}(\tau)$ to the set of faces of codimension $i$ of $\OP_{C',O'}(\tau)$ for all $2\leq i\leq |\tau|$.
\end{theorem}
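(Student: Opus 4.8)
The plan is to realize the injection directly on the combinatorial normal forms of Theorem~\ref{Theorem-ChainOrder-Normal}, using the codimension formulas \eqref{eq:codim:no-tight-chain} and \eqref{eq:codim:other} as the only numerical bookkeeping. Since the $(k+1)$-decomposition is obtained from the $k$-decomposition by moving exactly the rank-$(k+1)$ layer from the order side to the chain side, write $C'=C\sqcup Y^{k+1}$ and $O'=O\setminus Y^{k+1}$. A face of $\OP_{C,O}(\tau)$ is recorded by a face partition $\pi$ of $O$, sets $F_{0,i},F_{\mathrm{eq},i}$ at ranks $i\le k$, and a head set $F_{\mathrm{eq},k+1}\subseteq Y^{k+1}$; a face of $\OP_{C',O'}(\tau)$ by the analogous data with chain ranks up to $k+1$, a face partition $\pi'$ of $O'$, and heads in $Y^{k+2}$. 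The map $\Phi$ I would build leaves the data at ranks $1,\dots,k$ untouched, sets $\pi'$ to be $\pi$ with its rank-$(k+1)$ entries deleted, and re-encodes the rank-$(k+1)$ layer of $\pi$ together with $F_{\mathrm{eq},k+1}$ into chain and zero data at the new top chain rank, matching codimension term by term.

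Two structural facts about face partitions of a maximal ranked poset drive the construction. First, iterating Lemma~\ref{Lemma-Quotient-Tuple} (and using that any two elements in distinct ranks of $P_\tau$ are comparable) shows that every non-singleton block occupies an interval of ranks and contains \emph{all} elements of each strictly interior rank. Second, at most one block can meet both $Y^{k+1}$ and $Y^{k+2}$: two such blocks $B_1,B_2$ would force $[B_1]\le[B_2]$ and $[B_2]\le[B_1]$ in the quotient, contradicting antisymmetry. Hence the only boundary-crossing object to handle is a single block $B^{*}$, and I write $S^{*}=B^{*}\cap Y^{k+1}$ and $T^{*}=B^{*}\cap Y^{k+2}$.

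The core identities are then these. A chosen head $h\in F_{\mathrm{eq},k+1}$ becomes a tight chain \emph{coordinate} at rank $k+1$, and the chain is prolonged one rank to a single canonical head in $Y^{k+2}$ (or to the merged minimal block of $O'$ when $Y^{k+2}$ has no singleton block); since a singleton head contributes $(1-1)=0$ in \eqref{eq:codim:other}, the codimension is unchanged and the global ``$+1$'' merely migrates. For $B^{*}$ the first structural fact yields two regimes: if $B^{*}$ reaches rank $k+3$ or higher then $T^{*}=Y^{k+2}$ and $B^{*}\cap O'$ survives as the unique block of $\pi'$ meeting $Y^{k+2}$, so $S^{*}$ may be sent to zeros with contribution $|S^{*}|$, reproducing the lost $|B^{*}|-1=|S^{*}|+(|B^{*}\cap O'|-1)$; if $B^{*}$ lives only in ranks $k+1,k+2$ then $T^{*}$ would split into $|T^{*}|$ singletons, and the remedy is to \emph{fold} $B^{*}$ into the tight-chain system, taking $T^{*}$ as its head set and adjoining $S^{*}$ to the rank-$(k+1)$ coordinates. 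A direct computation shows this folding leaves \eqref{eq:codim:other} invariant, since the single ``$+1$'' of the chain together with the internal codimension $|S^{*}|+|T^{*}|-1$ of $B^{*}$ reassemble into $1+(|F_{\mathrm{eq},k+1}|+|S^{*}|-1)+(|T^{*}|-1)$. Free rank-$(k+1)$ singletons become free chain coordinates at no cost, consistent with condition (b) because it only demands \emph{one} tight coordinate per non-zero rank; if a chain is present but no rank-$(k+1)$ coordinate has been created, one free singleton is promoted, again at zero cost.

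The step I expect to be the main obstacle is injectivity, precisely because the single-tight-chain-system shape of the normal form forces several a priori independent features of the source face — the old chain system, the boundary block $B^{*}$, a promoted singleton — to be amalgamated into one system of $\Phi(F)$. The inverse must disentangle $F'_{\mathrm{eq},k+1}$ back into genuine heads versus the trace $S^{*}$, and must decide whether a head set in $Y^{k+2}$ arose from prolongation or from $T^{*}$; this requires fixing canonical choices (a definite prolonging head, a definite promoted singleton, and a fixed order in which features are read off) and then checking that the reconstruction of $(\pi,F_{0,\bullet},F_{\mathrm{eq},\bullet})$ is unique, while verifying throughout that the output satisfies conditions (a)--(d). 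With this in place the identities above give codimension preservation, and the range $2\le i\le|\tau|$ enters only at the extremes: $i=0$ is trivial, the vertex end $i=|\tau|$ is consistent with Stanley's equality $f_0(\OP)=f_0(\CP)$, and the facet case $i=1$ is set aside because the normalizing ``$+1$'' makes the single-facet count behave specially (only $f_{n-1}(\OP)\le f_{n-1}(\CP)$ is asserted in that degree).
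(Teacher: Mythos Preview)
Your architecture is the paper's: work in the normal forms of Theorem~\ref{Theorem-ChainOrder-Normal}, keep the data at ranks $1,\dots,k$ fixed, strip $Y^{k+1}$ from the face partition, and re-encode the unique boundary block $B^{*}$ together with the old heads as chain/zero data at rank $k+1$. The two structural observations you isolate (at most one block crosses ranks $k+1$ and $k+2$; non-singleton blocks are rank-intervals containing every element of every interior rank) are exactly the ones used in the paper.

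The gap is in your height-one rule. Your ``folding'' sends $S^{*}=B^{*}\cap Y^{k+1}$ into $F'_{\mathrm{eq},k+1}$ alongside the old heads $F_{\mathrm{eq},k+1}$; the image then records only the \emph{union} $F_{\mathrm{eq},k+1}\cup S^{*}$, and this is not enough to recover the two pieces. Concretely, with $Y^{k+1}=\{a,b,c\}$, $Y^{k+2}=\{d,e\}$ and all other data fixed, the face with heads $\{a\}$ and block $\{b,d\}$ and the face with heads $\{b\}$ and block $\{a,d\}$ both fold to $F'_{\mathrm{eq},k+1}=\{a,b\}$, $F'_{\mathrm{eq},k+2}=\{d\}$. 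No canonical ordering can repair this, since the two sources are symmetric in $a,b$. The paper avoids the collision by keeping the two pieces in \emph{different} slots: when $F$ already carries a chain it puts $S^{*}$ into the zero set $F'_{0,k+1}$ (not into the chain coordinates) and uses $T^{*}=B^{*}\cap Y^{k+2}$ as the head set, so that $(F'_{0,k+1},F'_{\mathrm{eq},k+1})=(S^{*},F_{\mathrm{eq},k+1})$ is read off directly. Folding in your sense is invoked only when $F$ has \emph{no} chain (so $F'_{\mathrm{eq},k+1}$ is free to hold $S^{*}$ alone, with canonically chosen singletons filling ranks $\le k$), and even then the sub-case $T^{*}=\{y_r^{k+2}\}$ is carved out and handled via zeros instead, to prevent a second collision with the all-singletons prolongation case. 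These two distinctions --- zeros rather than chain coordinates for $S^{*}$ whenever a chain is already present, and the exceptional treatment of $T^{*}=\{y_r^{k+2}\}$ --- are what your construction is missing.
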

The crucial insight is that the normal forms of the $k$- and $(k+1)$-decompositions differ only on the set $Y^{k+1}\cup Y^{k+2}$.
Thus, we construct the proof based on the case distinction of the face partition of the $k$-decomposition.
Proving the above result establishes the Hibi--Li inequality.
\begin{corollary}
  For $\tau\in \Z_{> 0}^\ell$ we have
  \begin{equation*}
    f_i(\OP(\tau))\leq f_i(\CP(\tau)) \quad \text{for all } 1\leq i\leq|\tau| \enspace .
  \end{equation*}
\end{corollary}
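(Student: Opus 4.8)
The plan is to obtain the inequality from the main theorem by chaining its injections across the entire family of chain-order polytopes, and then to settle separately the single dimension—the facets—that the theorem deliberately leaves out. Write $n=|\tau|$; every polytope $\OP_{C,O}(\tau)$ coming from a $k$-decomposition has this common dimension (since $|C|+|O|=|\tau|+2$), and the family runs from $\OP(\tau)$ at $k=0$ to $\CP(\tau)$ at $k=\ell$.

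First I would iterate Theorem~\ref{Hibi-Li-Claim-Crucial}: for each $k$ with $0\le k\le\ell-1$ it supplies an injection from the codimension-$i$ faces of the $k$-decomposition polytope into those of the $(k+1)$-decomposition polytope, valid for every $2\le i\le n$. Composing these maps over $k=0,1,\dots,\ell-1$ yields, for each such $i$, an injection from the codimension-$i$ faces of $\OP(\tau)$ into the codimension-$i$ faces of $\CP(\tau)$. An injection forces the corresponding cardinality inequality, and a codimension-$i$ face of an $n$-dimensional polytope is precisely a face of dimension $n-i$; hence
\begin{equation*}
  f_{n-i}(\OP(\tau))\le f_{n-i}(\CP(\tau))\qquad\text{for all }2\le i\le n,
\end{equation*}
equivalently $f_j(\OP(\tau))\le f_j(\CP(\tau))$ for all $0\le j\le n-2$. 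This already settles the claim for every $1\le j\le n-2$, and the top case $j=n$ is trivial since each polytope is its own unique top-dimensional face.

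The remaining index is $j=n-1$, the facets (codimension $1$), which lies outside the range of the main theorem; this is where I expect the real work. Here I would count rather than inject. By Proposition~\ref{prop-tame-facet} the inequalities listed in Lemma~\ref{Lemma-Chain-Order-Facet-Alternate} are exactly the facets, so counting the nonnegativity, order, and chain inequalities separately gives the facet number of the $k$-decomposition as
\begin{equation*}
  F(k)=\sum_{i=1}^{k}\tau_i+\sum_{i=k+1}^{\ell}\tau_i\tau_{i+1}+\prod_{i=1}^{k+1}\tau_i,
\end{equation*}
under the convention $\tau_{\ell+1}=1$. It then suffices to verify that $F$ is nondecreasing, and the consecutive difference factors cleanly as
\begin{equation*}
  F(k+1)-F(k)=(\tau_{k+2}-1)\,\tau_{k+1}\Bigl(\prod_{i=1}^{k}\tau_i-1\Bigr)\ \ge\ 0,
\end{equation*}
because every $\tau_i$ is a positive integer. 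Therefore $F(0)\le F(\ell)$, i.e. $f_{n-1}(\OP(\tau))\le f_{n-1}(\CP(\tau))$, and the whole range $1\le i\le|\tau|$ is covered.

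The conceptual depth all sits in Theorem~\ref{Hibi-Li-Claim-Crucial}, whose normal form (Theorem~\ref{Theorem-ChainOrder-Normal}) is engineered for faces of codimension at least two; facets are the degenerate extreme where that description is unavailable, which is exactly why they must be handled by the separate count above. The count is transparent once one sees that the lone chain-facet term $\prod_{i=1}^{k+1}\tau_i$ grows fast enough to absorb the $\tau_{k+1}\tau_{k+2}$ order facets that are lost when rank $k+1$ migrates from $O$ into $C$. Finally, the excluded vertex case $j=0$ is consistent: the composed map at codimension $n$ is an injection, and Stanley's equality $f_0(\OP)=f_0(\CP)$ \cite{Sta86} forces it to be a bijection there.
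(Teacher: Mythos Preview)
Your argument is correct and follows the paper's approach of iterating Theorem~\ref{Hibi-Li-Claim-Crucial} from $k=0$ to $k=\ell$, but you are in fact more careful than the paper: the paper's one-line proof invokes the theorem without remarking that its codimension range $2\le i\le|\tau|$ leaves out the facets, whereas you correctly isolate this case and close it with a direct count (the paper presumably relies tacitly on the Hibi--Li facet inequality cited in the introduction). Your explicit computation $F(k+1)-F(k)=(\tau_{k+2}-1)\tau_{k+1}\bigl(\prod_{i=1}^{k}\tau_i-1\bigr)\ge 0$ actually establishes the stronger monotone statement for facets across the whole chain-order family, not just the endpoint inequality, making your proof self-contained.
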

\begin{proof}
  The claim follows from Theorem~\ref{Hibi-Li-Claim-Crucial} since $\OP(\tau)$ is a chain-order polytope for the $0$-partition, and $\CP(\tau)$ to that of the $n$-partition.
\end{proof}

\subsection{Construction: Preliminaries}
We now define a map, $\psi$, which sends faces of $\OP_{C,O}(\tau)$ to faces of $\OP_{C',O'}(\tau)$.
We first introduce some technical details that will be used in the construction, then we will define $\psi$ by distinguishing several cases.
Finally, in Section~\ref{sec:injective} we will see that $\psi$ is injective.
Along the way, we exploit that the normal forms differ only locally.
Let us fix a face $F$ of $\OP_{C,O}(\tau)$ with $\codim F = t$. We construct a corresponding face $F'=\psi(F)$ of $\OP_{C',O'}(\tau)$ of the same codimension.
\subsubsection{Face partition:}
The idea is to \enquote{cut off} the elements $Y^{k+1}$ from all blocks $\pi$ of the face partition of $O\cup\{\hat{1}\}$. 
We look at the partition
\begin{equation*}
  \pi'':=\{K\cap O'\mid K\in \pi, K\not\subseteq Y^{{k+1}}\cup Y^{{k+2}}\}
\end{equation*}
and define the extended partition $\pi'$ by putting the remaining elements of $O'$ (those which are not contained in any block of $\pi''$) into singletons.
The following result shows that $\pi'$ is a face partition.
\begin{lemma}
  The partition $\pi''$ is connected, and thus also $\pi'$.
\end{lemma}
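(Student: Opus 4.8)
The plan is to verify the defining property block by block: it suffices to show that every block of $\pi''$ is connected as an induced subposet, since $\pi'$ is obtained from $\pi''$ only by adjoining singletons, which are trivially connected. Throughout I would use the special structure of $P_\tau$: two elements are comparable precisely when they lie in distinct ranks, so for any subset $S\subseteq P_\tau$ the comparability graph of the induced subposet is complete multipartite with the rank-classes as its parts. Consequently $S$ is connected if and only if $S$ is a singleton or $S$ meets at least two distinct ranks. Recall also that, as a face partition of $O$ (Proposition~\ref{Proposition-Face-Partition} together with Theorem~\ref{Theorem-Order-Face-Partition}), $\pi$ is both connected and compatible.

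The key step is a \emph{rank-saturation} property extracted from compatibility: if a block $K$ of $\pi$ contains an element of rank $k+1$ and an element of rank $j\geq k+3$, then $Y^{k+2}\subseteq K$. I would prove this by the standard antisymmetry argument. Suppose some $m\in Y^{k+2}$ lies in a block $K'\neq K$, and pick $p\in K\cap Y^{k+1}$ and $q\in K\cap Y^{j}$. Since $p<m<q$ in $P_\tau$ (elements of distinct ranks are comparable), the induced order on blocks satisfies $K\leq K'\leq K$, contradicting the antisymmetry of the compatible partition $\pi$. Hence no element of $Y^{k+2}$ escapes $K$.

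With this in hand, fix a block of $\pi''$, that is, $K\cap O'$ for some $K\in\pi$ with $K\not\subseteq Y^{k+1}\cup Y^{k+2}$; in particular $K$ contains an element of rank $j\geq k+3$, which lies in $O'$, so $K\cap O'\neq\emptyset$. If $K\cap Y^{k+1}=\emptyset$, then $K\subseteq O'$ and $K\cap O'=K$ is connected because $K$ is a block of the connected partition $\pi$. If instead $K\cap Y^{k+1}\neq\emptyset$, then $K$ contains both a rank-$(k+1)$ element and a rank-$j$ element with $j\geq k+3$, so the saturation property gives $Y^{k+2}\subseteq K$. Removing $Y^{k+1}$ then leaves a set $K\cap O'$ that still meets rank $k+2$ (as $Y^{k+2}\neq\emptyset$) and rank $j\geq k+3$, hence meets at least two ranks and is connected. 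This settles every block of $\pi''$, and the claim for $\pi'$ follows as noted above.

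I expect the only genuine subtlety to be recognising why the exclusion of blocks contained in $Y^{k+1}\cup Y^{k+2}$ is exactly what makes the statement true, rather than a cosmetic choice. Indeed, a block $K$ spanning only ranks $k+1$ and $k+2$ is connected, yet $K\cap O'$ is an antichain inside $Y^{k+2}$, which would be disconnected as soon as it has two or more elements; discarding such blocks (and re-inserting their surviving elements as singletons of $\pi'$) is precisely what allows the saturation argument to guarantee that a rank-$(k+2)$ witness survives alongside a higher-rank element. Pinning down this interplay between the exclusion condition and the compatibility-driven saturation is the heart of the argument; the remaining bookkeeping is routine.
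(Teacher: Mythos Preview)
Your argument is correct and follows essentially the same route as the paper: both reduce to showing that each block $K\cap O'$ of $\pi''$ is connected, split into the cases $K\subseteq O'$ versus $K\cap Y^{k+1}\neq\emptyset$, and in the latter case use that $K$ must contain all of $Y^{k+2}$ so that $K\cap O'$ still spans two ranks. The only difference is cosmetic: the paper invokes convexity of blocks (citing \cite[Remark~3.8]{Peg18}) and a fence argument, while you re-derive this saturation directly from compatibility and phrase connectedness via the two-ranks criterion; your closing remark on why the exclusion $K\not\subseteq Y^{k+1}\cup Y^{k+2}$ is essential is a nice addition not spelled out in the paper.
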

\begin{proof}
  If $K\cap O'=K$ for $K\in \pi$, then this block is, of course, connected. 
  If $K\cap O'\neq K$, then $K$ contains elements from $Y^{{k+1}}$. 
  However, by construction, we require that $K$ also contains elements greater than $Y^{{k+2}}$ and thus $K$ contains $Y^{{k+2}}$ ($K$ is convex~{\cite[Remark 3.8]{Peg18}}), which allows it to be connected by forming the fence $a>y<b$ for $a,b\in K\cap O'$ and some $y\in Y^{k}$. 
  Thus $K\cap O'$ is connected.
  Hence, we have that $\pi'$ is connected, and of course, it is again compatible and a face partition.
\end{proof}

One might ask why we cannot just intersect with $O'$ and leave out the condition on $K$. 
The reason is that a block $K\in\pi$ of height $1$ would result in a disconnected block if we intersect it with $O'$.
Hence, we must leave out the minimal blocks of height $1$.
Therefore, we need to put the restriction on the block $K$.

\subsubsection{Zero elements:}
Here, we just copy the zero elements. We set
\begin{equation*}
  F'_{0,i}=F_{0,i}
\end{equation*}
for all $1\leq i\leq k$. 
The variables for $i=k+1$ will be considered in the individual cases.

\subsubsection{Chain elements:}
Similarly, as before, we simply copy the chain elements.
If $F$ already uses chains, then we set
\begin{equation*}
  F'_{\mathrm{eq},i}=F_{\mathrm{eq},i}
\end{equation*}
for all $1\leq i\leq k+1$. 
The chain variables for $i=k+2$ will be considered in the individual cases.

\subsection{Construction: Generic Case}
Now, we distinguish between two main cases.
First, we consider the situation where the face $F$ already uses chains; this is the generic case.
We must determine which elements of $Y^{k+1}$ are in singleton blocks, if there are any.
Let $r\in\{1,\ldots,\tau_{k+2}\}$ be minimal with the property that $y_{r}^{{k+2}}$ is a singleton, if it exists.
To make choices for the chain elements in $Y^{k+2}$, we consider two subcases.

\subsubsection{All elements of $Y^{{k+1}}$ are in singletons in $\pi$}\label{Case-Gen-Single}
Here, we do not cut off any block. Thus, we simply continue the chains and define
\begin{equation*}
    F'_{0,k+1}=\emptyset,\quad \text{and} \quad F'_{\mathrm{eq},k+2}=\{y_{r}^{{k+2}}\} \enspace .
\end{equation*}
Naturally, in case of $k=\ell-1$, the set $F'_{\mathrm{eq},k+2}$ cannot be defined and we only set the zero variables.
The codimension also match as we did not alter the non-singleton blocks in $F$ and did not remove nor add chain or zero variables.

\subsubsection{Not all elements of $Y^{{k+1}}$ are in singletons blocks in $\pi$}\label{Case-Gen-Non-Single}
In this case, let $\{a_1,\dots, a_l\}$ be the set of elements of $Y^{{k+1}}$ contained in blocks with elements greater than themselves. 
These elements must be in the same block, which we call $K\in\pi$; note that $K$ is convex by \cite[Remark 3.8]{Peg18}.
We define
\begin{equation}\label{Eq-Min-Block-Decomp}
    A:=K\cap Y^{k+1} \quad \text{and} \quad B:=K\cap Y^{k+2} \enspace .
\end{equation}
With these two sets, we set
\begin{equation*}
  F'_{0,k+1}=A \enspace.
\end{equation*}
If $K$ is of height $1$, we further set
\begin{equation*}
  F'_{\mathrm{eq},k+2}=B \enspace .
\end{equation*}
The codimensions remain consistent, as we appropriately set the required number of variables to 0 and assign the necessary chain variables when applicable.
\begin{figure}
    \centering
    \adjustbox{scale=0.5}{
        \begin{tikzcd}[ampersand replacement=\&,execute at end picture={
        \draw [red]
            ($(1-5-3)+(-3, +1.8em)$) -- ($(1-5-3)+(3, +1.8em)$);
        \draw [red]
            ($(2-4-3)+(-3, +1.8em)$) -- ($(2-4-3)+(3, +1.8em)$);
        \fill[green,opacity=0.3] \convexpath{1-4-1,1-3-4,1-4-2}{8pt};
        \fill[orange,opacity=0.3] \convexpath{1-1-3,1-2-5,1-2-3}{8pt};
        \fill[orange,opacity=0.3] \convexpath{2-1-3,2-2-5,2-2-3}{8pt};
        \fill[red,opacity=0.3] \convexpath{1-8-3,1-7-3,1-6-2,1-5-3,1-4-4,1-5-3,1-6-2,1-7-3}{8pt};
        \fill[red,opacity=0.3] \convexpath{1-8-3,1-7-3,1-6-4,1-5-3,1-4-5,1-5-3,1-6-4,1-7-3}{8pt};
        \fill[red,opacity=0.3] \convexpath{2-8-3,2-7-3,2-6-2,2-5-3,2-4-4,2-3-4,2-4-4,2-5-3,2-6-2,2-7-3}{8pt};
        \fill[red,opacity=0.3] \convexpath{2-8-3,2-7-3,2-6-4,2-5-3,2-4-5,2-3-4,2-4-5,2-5-3,2-6-4,2-7-3}{8pt};
        \fill[blue,opacity=0.3]  (1-7-4) circle (.265);
        \fill[blue,opacity=0.3]  (1-7-5) circle (.265);
        \fill[blue,opacity=0.3]  (2-7-4) circle (.265);
        \fill[blue,opacity=0.3]  (2-7-5) circle (.265);
        \fill[blue,opacity=0.3]  (2-4-1) circle (.265);
        \fill[blue,opacity=0.3]  (2-4-2) circle (.265);
        }]
        \&\&\&\& |[alias=1-1-3]|{\hat{1}} \&\&\&\&\&\&\&\&\&\&\& |[alias=2-1-3]|{\hat{1}} \\
        \& {\tau_6=3} \& |[alias=1-2-1]|\bullet \&\& |[alias=1-2-3]|\bullet \& {} \& |[alias=1-2-5]|\bullet \&\&\&\&\&\& {\tau_6=1} \& |[alias=2-2-1]|\bullet \&\& |[alias=2-2-3]|\bullet \&\& |[alias=2-2-5]|\bullet \\
        {O} \& {\tau_5=2} \&\& |[alias=1-3-2]|\bullet \&\& |[alias=1-3-4]|\bullet \&\&\&\&\&\& {O'} \& {\tau_5=2} \&\& |[alias=2-3-2]|\bullet \&\& |[alias=2-3-4]|\bullet \\
        \& {\tau_4=4} \& |[alias=1-4-1]|\bullet \& |[alias=1-4-2]|\bullet \&\& |[alias=1-4-4]|\bullet \& |[alias=1-4-5]|\bullet \&\&\&\&\&\& {\tau_4=4} \& |[alias=2-4-1]|\bullet \& |[alias=2-4-2]|\bullet \&|[alias=2-4-3]|{}\& |[alias=2-4-4]|\bullet \& |[alias=2-4-5]|\bullet \\
        \& {\tau_3=1} \&\&\& |[alias=1-5-3]|\bullet \&\&\& {} \&\&\& {} \&\& {\tau_3=1} \&\&\& |[alias=2-5-3]|\bullet \\
        {C} \& {\tau_2=2} \&\& |[alias=1-6-2]|\bullet \&\& |[alias=1-6-4]|\bullet \&\&\&\&\&\& {C'} \& {\tau_2=2} \&\& |[alias=2-6-2]|\bullet \&\& |[alias=2-6-4]|\bullet \\
        \& {\tau_1=5} \& |[alias=1-7-1]|\bullet \& |[alias=1-7-2]|\bullet \& |[alias=1-7-3]|\bullet \& |[alias=1-7-4]|\bullet \& |[alias=1-7-5]|\bullet \&\&\&\& {} \&\& {\tau_1=4} \& |[alias=2-7-1]|\bullet \& |[alias=2-7-2]|\bullet \& |[alias=2-7-3]|\bullet \& |[alias=2-7-4]|\bullet \& |[alias=2-7-5]|\bullet \\
        \&\&\&\& |[alias=1-8-3]|{\hat{0}} \&\&\&\&\&\&\&\&\&\&\& |[alias=2-8-3]|{\hat{0}}
        \arrow[no head, from=2-3, to=1-5]
        \arrow[no head, from=2-5, to=1-5]
        \arrow[no head, from=2-7, to=1-5]
        \arrow[no head, from=7-3, to=6-4]
        \arrow[no head, from=7-4, to=6-4]
        \arrow[no head, from=7-5, to=6-4]
        \arrow[no head, from=7-6, to=6-4]
        \arrow[no head, from=7-7, to=6-4]
        \arrow[no head, from=7-3, to=6-6]
        \arrow[no head, from=7-4, to=6-6]
        \arrow[no head, from=7-5, to=6-6]
        \arrow[no head, from=7-6, to=6-6]
        \arrow[no head, from=7-7, to=6-6]
        \arrow[no head, from=8-5, to=7-3]
        \arrow[no head, from=8-5, to=7-4]
        \arrow[no head, from=8-5, to=7-5]
        \arrow[no head, from=8-5, to=7-6]
        \arrow[no head, from=8-5, to=7-7]
        \arrow[no head, from=6-4, to=5-5]
        \arrow[no head, from=6-6, to=5-5]
        \arrow[no head, from=5-5, to=4-3]
        \arrow[no head, from=5-5, to=4-4]
        \arrow[no head, from=5-5, to=4-6]
        \arrow[no head, from=5-5, to=4-7]
        \arrow[no head, from=4-3, to=3-4]
        \arrow[no head, from=4-4, to=3-4]
        \arrow[no head, from=4-6, to=3-4]
        \arrow[no head, from=4-7, to=3-4]
        \arrow[no head, from=4-3, to=3-6]
        \arrow[no head, from=4-4, to=3-6]
        \arrow[no head, from=4-6, to=3-6]
        \arrow[no head, from=4-7, to=3-6]
        \arrow[no head, from=3-4, to=2-3]
        \arrow[no head, from=3-4, to=2-5]
        \arrow[no head, from=3-4, to=2-7]
        \arrow[no head, from=3-6, to=2-3]
        \arrow[no head, from=3-6, to=2-5]
        \arrow[no head, from=3-6, to=2-7]
        \arrow[from=5-8, to=5-11]
        \arrow[no head, from=5-16, to=4-14]
        \arrow[no head, from=5-16, to=4-18]
        \arrow[no head, from=8-16, to=7-14]
        \arrow[no head, from=8-16, to=7-18]
        \arrow[no head, from=6-15, to=7-14]
        \arrow[no head, from=6-15, to=7-18]
        \arrow[no head, from=6-17, to=7-14]
        \arrow[no head, from=6-17, to=7-18]
        \arrow[no head, from=5-16, to=6-15]
        \arrow[no head, from=5-16, to=6-17]
        \arrow[no head, from=4-14, to=3-15]
        \arrow[no head, from=4-15, to=3-15]
        \arrow[no head, from=4-17, to=3-15]
        \arrow[no head, from=4-18, to=3-15]
        \arrow[no head, from=4-14, to=3-17]
        \arrow[no head, from=4-15, to=3-17]
        \arrow[no head, from=4-17, to=3-17]
        \arrow[no head, from=4-18, to=3-17]
        \arrow[no head, from=5-16, to=4-15]
        \arrow[no head, from=5-16, to=4-17]
        \arrow[no head, from=3-15, to=2-14]
        \arrow[no head, from=2-14, to=1-16]
        \arrow[no head, from=2-16, to=1-16]
        \arrow[no head, from=2-18, to=1-16]
        \arrow[no head, from=3-17, to=2-18]
        \arrow[no head, from=3-15, to=2-16]
        \arrow[no head, from=3-17, to=2-16]
        \arrow[no head, from=3-17, to=2-14]
        \arrow[no head, from=3-15, to=2-18]
        \arrow[no head, from=8-16, to=7-17]
        \arrow[no head, from=8-16, to=7-15]
        \arrow[no head, from=7-15, to=6-15]
        \arrow[no head, from=7-15, to=6-17]
        \arrow[no head, from=7-17, to=6-17]
        \arrow[no head, from=7-17, to=6-15]
        \arrow[no head, from=7-16, to=6-15]
        \arrow[no head, from=7-16, to=6-17]
        \end{tikzcd}
    }
    \caption{
        Example of construction in Case \ref{Case-Gen-Non-Single} when $F$ already uses a chain and $K$ is of height $1$. The orange blocks are blocks of the face partitions of $O\cup\{\hat{1}\}$ and $O'\cup\{\hat{1}\}$, respectively. For the remaining color-coding, see figure \ref{ex:running}.
    }
\end{figure}
\subsection{Construction: Degenerate Case}
Now, we consider the degenerate cases where the face $F$ does not use any chains.
It turns out that the previous construction from the generic case would fail, as it would increase the codimension of the face constructed by one.
Hence, we will look closer into the chain variables.
Once again, we arrive at two subcases.
\subsubsection{All elements of $Y^{{k+1}}$ are in singletons in $\pi$}\label{Case-Deg-Single}
Here, we only set
\begin{equation*}
    F'_{0,k+1}=\emptyset \enspace .
\end{equation*}
As in the generic case above, we can verify that the codimensions match.
\subsubsection{Not all elements of $Y^{{k+1}}$ are in singletons in $\pi$}
This is the most delicate case.
Again, we let $K\in\pi$ be the block containing the set $A\subseteq Y^{k+1}$.
To proceed, we further distinguish different configurations of $K$.

\paragraph{Assume that the height of $K$ is at least two}\label{Case-Deg-Height-2}
Here, we can set 
\begin{equation*}
    F'_{0,k+1}=A \enspace .
\end{equation*}
Once again the codimensions match.
\paragraph{Assume that the height of $K$ equals one}\label{Case-Deg-Height-1}
Here, we have
\begin{equation*}
    K=A\cup B
\end{equation*}
with $A$ and $B$ from \eqref{Eq-Min-Block-Decomp}.
For a refined argument, we now additionally consider the properties of set $B$.
\subparagraph{Additionally assume that $B\neq\{y_{r}^{k+2}\}$}\label{Case-Deg-Height-1-Max-Diff}{%
  Here, we set
  \begin{equation*}
    F'_{0,k+1}=\emptyset \enspace . 
  \end{equation*}
  For each $1\leq i\leq k$ choose $y^{i}_{t}\in Y^{{i}}\setminus F_{0,i}$ with $1\leq t\leq\tau_i$ minimal, if the set is non-empty.
  These elements form a chain of length $k$, which we assign to $F'_{\mathrm{eq},i}$.
  This chain allows to identify $K$.
  With
  \begin{equation*}
    F'_{\mathrm{eq},k+1}=A, \quad \text{and} \quad F'_{\mathrm{eq},k+2}=B
  \end{equation*}
  we obtain the equality
  \begin{equation*}
    1+|A|-1+|B|-1=|A|+|B|-1=|K|-1 \enspace.
  \end{equation*}
  This shows that $F'$ has the same codimension as $F$.%
}
\subparagraph{Additionally assume that $B=\{y_{r}^{{k+2}}\}$}\label{Case-Deg-Height-1-Max-Standard}{%
  Here we need to avoid a conflict with Case~\ref{Case-Gen-Single}.
  Thus, we set
  \begin{equation*}
    F'_{0,k+1}=A \enspace .
  \end{equation*}
  Since $|B|=1$, we obtain the correct codimension for $F'$.
}
\begin{figure}
    \centering
    \adjustbox{scale=0.5}{
        \begin{tikzcd}[ampersand replacement=\&,execute at end picture={
            \draw [red]
                ($(1-5-3)+(-3, +1.8em)$) -- ($(1-5-3)+(3, +1.8em)$);
            \draw [red]
                ($(2-4-3)+(-3, +1.8em)$) -- ($(2-4-3)+(3, +1.8em)$);
            \fill[green,opacity=0.3] \convexpath{1-4-1,1-3-4,1-4-2}{8pt};
            \fill[orange,opacity=0.3] \convexpath{1-1-3,1-2-5,1-2-3}{8pt};
            \fill[orange,opacity=0.3] \convexpath{2-1-3,2-2-5,2-2-3}{8pt};
            \fill[red,opacity=0.3] \convexpath{2-8-3,2-7-1,2-6-2,2-5-3,2-4-2,2-3-4,2-4-2,2-5-3,2-6-2,2-7-1}{8pt};
            \fill[red,opacity=0.3] \convexpath{2-8-3,2-7-1,2-6-2,2-5-3,2-4-1,2-3-4,2-4-1,2-5-3,2-6-2,2-7-1}{8pt};
            \fill[blue,opacity=0.3]  (1-7-4) circle (.265);
            \fill[blue,opacity=0.3]  (1-7-5) circle (.265);
            \fill[blue,opacity=0.3]  (2-7-4) circle (.265);
            \fill[blue,opacity=0.3]  (2-7-5) circle (.265);
            }]
        	\&\&\&\& |[alias=1-1-3]|{\hat{1}} \&\&\&\&\&\&\&\&\&\&\& |[alias=2-1-3]|{\hat{1}} \\
        	\& {\tau_6=3} \& |[alias=1-2-1]|\bullet \&\& |[alias=1-2-3]|\bullet \& {} \& |[alias=1-2-5]|\bullet \&\&\&\&\&\& {\tau_6=1} \& |[alias=2-2-1]|\bullet \&\& |[alias=2-2-3]|\bullet \&\& |[alias=2-2-5]|\bullet \\
        	{O} \& {\tau_5=2} \&\& |[alias=1-3-2]|\bullet \&\& |[alias=1-3-4]|\bullet \&\&\&\&\&\& {O'} \& {\tau_5=2} \&\& |[alias=2-3-2]|\bullet \&\& |[alias=2-3-4]|\bullet \\
        	\& {\tau_4=4} \& |[alias=1-4-1]|\bullet \& |[alias=1-4-2]|\bullet \&\& |[alias=1-4-4]|\bullet \& |[alias=1-4-5]|\bullet \&\&\&\&\&\& {\tau_4=4} \& |[alias=2-4-1]|\bullet \& |[alias=2-4-2]|\bullet \&|[alias=2-4-3]|{}\& |[alias=2-4-4]|\bullet \& |[alias=2-4-5]|\bullet \\
        	\& {\tau_3=1} \&\&\& |[alias=1-5-3]|\bullet \&\&\& {} \&\&\& {} \&\& {\tau_3=1} \&\&\& |[alias=2-5-3]|\bullet \\
        	{C} \& {\tau_2=2} \&\& |[alias=1-6-2]|\bullet \&\& |[alias=1-6-4]|\bullet \&\&\&\&\&\& {C'} \& {\tau_2=2} \&\& |[alias=2-6-2]|\bullet \&\& |[alias=2-6-4]|\bullet \\
        	\& {\tau_1=5} \& |[alias=1-7-1]|\bullet \& |[alias=1-7-2]|\bullet \& |[alias=1-7-3]|\bullet \& |[alias=1-7-4]|\bullet \& |[alias=1-7-5]|\bullet \&\&\&\& {} \&\& {\tau_1=4} \& |[alias=2-7-1]|\bullet \& |[alias=2-7-2]|\bullet \& |[alias=2-7-3]|\bullet \& |[alias=2-7-4]|\bullet \& |[alias=2-7-5]|\bullet \\
        	\&\&\&\& |[alias=1-8-3]|{\hat{0}} \&\&\&\&\&\&\&\&\&\&\& |[alias=2-8-3]|{\hat{0}}
        	\arrow[no head, from=2-3, to=1-5]
        	\arrow[no head, from=2-5, to=1-5]
        	\arrow[no head, from=2-7, to=1-5]
        	\arrow[no head, from=7-3, to=6-4]
        	\arrow[no head, from=7-4, to=6-4]
        	\arrow[no head, from=7-5, to=6-4]
        	\arrow[no head, from=7-6, to=6-4]
        	\arrow[no head, from=7-7, to=6-4]
        	\arrow[no head, from=7-3, to=6-6]
        	\arrow[no head, from=7-4, to=6-6]
        	\arrow[no head, from=7-5, to=6-6]
        	\arrow[no head, from=7-6, to=6-6]
        	\arrow[no head, from=7-7, to=6-6]
        	\arrow[no head, from=8-5, to=7-3]
        	\arrow[no head, from=8-5, to=7-4]
        	\arrow[no head, from=8-5, to=7-5]
        	\arrow[no head, from=8-5, to=7-6]
        	\arrow[no head, from=8-5, to=7-7]
        	\arrow[no head, from=6-4, to=5-5]
        	\arrow[no head, from=6-6, to=5-5]
        	\arrow[no head, from=5-5, to=4-3]
        	\arrow[no head, from=5-5, to=4-4]
        	\arrow[no head, from=5-5, to=4-6]
        	\arrow[no head, from=5-5, to=4-7]
        	\arrow[no head, from=4-3, to=3-4]
        	\arrow[no head, from=4-4, to=3-4]
        	\arrow[no head, from=4-6, to=3-4]
        	\arrow[no head, from=4-7, to=3-4]
        	\arrow[no head, from=4-3, to=3-6]
        	\arrow[no head, from=4-4, to=3-6]
        	\arrow[no head, from=4-6, to=3-6]
        	\arrow[no head, from=4-7, to=3-6]
        	\arrow[no head, from=3-4, to=2-3]
        	\arrow[no head, from=3-4, to=2-5]
        	\arrow[no head, from=3-4, to=2-7]
        	\arrow[no head, from=3-6, to=2-3]
        	\arrow[no head, from=3-6, to=2-5]
        	\arrow[no head, from=3-6, to=2-7]
        	\arrow[from=5-8, to=5-11]
        	\arrow[no head, from=5-16, to=4-14]
        	\arrow[no head, from=5-16, to=4-18]
        	\arrow[no head, from=8-16, to=7-14]
        	\arrow[no head, from=8-16, to=7-18]
        	\arrow[no head, from=6-15, to=7-14]
        	\arrow[no head, from=6-15, to=7-18]
        	\arrow[no head, from=6-17, to=7-14]
        	\arrow[no head, from=6-17, to=7-18]
        	\arrow[no head, from=5-16, to=6-15]
        	\arrow[no head, from=5-16, to=6-17]
        	\arrow[no head, from=4-14, to=3-15]
        	\arrow[no head, from=4-15, to=3-15]
        	\arrow[no head, from=4-17, to=3-15]
        	\arrow[no head, from=4-18, to=3-15]
        	\arrow[no head, from=4-14, to=3-17]
        	\arrow[no head, from=4-15, to=3-17]
        	\arrow[no head, from=4-17, to=3-17]
        	\arrow[no head, from=4-18, to=3-17]
        	\arrow[no head, from=5-16, to=4-15]
        	\arrow[no head, from=5-16, to=4-17]
        	\arrow[no head, from=3-15, to=2-14]
        	\arrow[no head, from=2-14, to=1-16]
        	\arrow[no head, from=2-16, to=1-16]
        	\arrow[no head, from=2-18, to=1-16]
        	\arrow[no head, from=3-17, to=2-18]
        	\arrow[no head, from=3-15, to=2-16]
        	\arrow[no head, from=3-17, to=2-16]
        	\arrow[no head, from=3-17, to=2-14]
        	\arrow[no head, from=3-15, to=2-18]
        	\arrow[no head, from=8-16, to=7-17]
        	\arrow[no head, from=8-16, to=7-15]
        	\arrow[no head, from=7-15, to=6-15]
        	\arrow[no head, from=7-15, to=6-17]
        	\arrow[no head, from=7-17, to=6-17]
        	\arrow[no head, from=7-17, to=6-15]
        	\arrow[no head, from=7-16, to=6-15]
        	\arrow[no head, from=7-16, to=6-17]
        \end{tikzcd}
    }
    \caption{
        Example of construction in Case \ref{Case-Deg-Height-1-Max-Diff}. For color-coding, see figure \ref{ex:running}.
    }
\end{figure}
\subsection{Injectivity of the Map $\psi$}\label{sec:injective}
We now consider a function $\psi$ which maps faces of $\OP_{C,O}(\tau)$ to faces of $\OP_{C',O'}(\tau)$.
Let $G'$ be a face of $\OP_{C',O'}(\tau)$ with codimension $t$, and let $G$ be a face of $\OP_{C,O}(\tau)$ such that $\psi(G)=G'$.
To show that $\psi$ is injective, we need to prove that $G$ is unique.

Let $\pi'$ be the face partition of $G'$, and let $\pi$ be the face partition of $G$. 
Then, we can immediately reconstruct all blocks of $G$ contained in $O'$. 
Next, we distinguish the set of variables set to $0$.
\subsubsection{The case $G'_{0,k+1}=\emptyset$}
There are two subcases.
\paragraph{Assume $G'_{\mathrm{eq},k+2}=\{y_{r'}^{k+2}\}$ or $G'_{\mathrm{eq},k+2}=\emptyset$.} The condition $G'_{\mathrm{eq},k+2}=\emptyset$ also includes the case when no chain inequality is tight.
Here, $y_{r'}^{{k+2}}$ has the minimal index $r'$ while in a singleton block in $\pi$.
Then, $G$' came from the Cases~\ref{Case-Gen-Single} or~\ref{Case-Deg-Single}.
Thus, we can reconstruct $\pi$, the sets $G_{0,i}$ for $1\leq i\leq k$, and if we use chains, we can fully reconstruct them as well.

\paragraph{Assume $G'_{\mathrm{eq},k+2}\neq\emptyset$ and $G'_{\mathrm{eq},k+2}\neq\{y_{r'}^{k+2}\}$.}
Then we are in the case \ref{Case-Deg-Height-1-Max-Diff}. Hence, we know that $G$ uses no chain and has a block $K\subseteq Y^{{k+1}}\cup Y^{{k+2}}$. 
We can reconstruct $K$ via 
\begin{equation*}
  K=G'_{\mathrm{eq},k+1}\cup G'_{\mathrm{eq},k+2}
\end{equation*}
and thus the whole partition $\pi$. 
Furthermore, we also have that we can reconstruct $G_{0,i}$ for $1\leq i\leq k$.

\subsubsection{The case $G'_{0,k+1}\neq\emptyset$}
We have $\{a_1,\dots,a_l\}\subseteq Y^{{k+1}}$ contained in a block $K\in\pi$ containing elements greater than them.
There are three subcases.


\paragraph{ All elements of $Y^{{k+2}}$ are contained in the same block in $\pi'$}
Then we know that $K$ has height at least $2$. Hence, we are in the Cases \ref{Case-Gen-Non-Single} or \ref{Case-Deg-Height-2}.
In this instance, $G'_{0,k+1}$ provides us exactly with the minimal elements of $K$. Thus, we can reconstruct $K$ via
\begin{equation*}
  K=T\cup G'_{0,k+1} \enspace,
\end{equation*}
where $T\in\pi'$ is the block containing $Y^{k+2}$.  In this way, we can recover the entire partition $\pi$.
Similarly, we can reconstruct all chain elements and also all variables set to $0$. 

\paragraph{Not all elements of $Y^{{k+2}}$ are contained in the same block in $\pi'$ and $G'$ uses a chain.}
Then we know that $K$ is of height exactly $1$, $G$ already uses a chain and we are in the Case \ref{Case-Gen-Non-Single}.
In this instance, we can reconstruct
\begin{equation*}
  K=G'_{0,k+1}\cup G'_{\mathrm{eq},k+2}
\end{equation*}
and thus, again, the whole partition $\pi$. 
We recover the chains and the remaining variables set to $0$.
\paragraph{Not all elements of $Y^{{k+2}}$ are contained in the same block in $\pi'$ and $G'$ uses no chain or $k=\ell-1$}
This implies we are in the Case \ref{Case-Deg-Height-1-Max-Standard} and hence we have that $K$ is of height exactly $1$ and $G$ uses no chain or $k=\ell-1$.
Then we get
\begin{equation*}
  K=G'_{0,k+1}\cup\{y_{r'}^{{k+2}}\} \enspace ,
\end{equation*}
where $r'$ is the minimal index such that $y_{r'}^{{k+2}}$ is a singleton in $\pi'$, or we have
\begin{equation*}
  K=G'_{0,k+1}\cup\{\hat{1}\}
\end{equation*}
if $k=\ell-1$.
This gives us the entire partition $\pi$, and so we can reconstruct all variables set to $0$.

\section{Computations}\label{sec-comp}

Here we discuss methods of computing the face numbers of the order and chain polytopes of an arbitrary finite poset and their implementation.
Order and chain polytopes of posets are unique in that both their vertex and facet descriptions can be derived from the poset through combinatorial means.
Such a pair of a point set and a set of linear inequalities is also known as the \emph{double description} of a polytope.
This means that convex hull computations, which can be tedious or even forbiddingly expensive, are unnecessary; see \cite{HDCG3:convex+hulls} for a survey and \cite{AvisBremnerSeidel:1997,polymake:2017} for computational experiments.
The incidences between vertices and facets are directly available from a double description.
Since the face lattice of a polytope is both atomic and coatomic \cite[\S2.2]{Ziegler:1995}, those vertex-facet incidences determine the entire face lattice.
Kaibel and Pfetsch found an output-sensitive algorithm for computing the entire Hasse diagram of the face lattice from the vertex-facet incidences of a polytope \cite{KaibelPfetsch:2002}.
This algorithm is implemented in \polymake \cite{polymake:2017}, which we used for our experiments.
Kliem and Stump recently described a slightly faster method to compute the faces without the covering relations, i.e., the nodes of the Hasse diagram but not the edges \cite{KliemStump:2022}.
Yet, for our purposes, we found the Kaibel--Pfetsch algorithm to be  sufficiently fast.

This leaves the question of how to obtain the vertices and the facets of the order and chain polytopes in the first place.
Each step in the procedure are standard graph algorithms.
The initialization phase is the same for the order and the chain polytope.
The input consists of the Hasse diagram of the extended poset $\hat{P}$, represented as a directed graph with arcs oriented upward.
Starting at the bottom element $\hat{0}$, we perform a depth-first search to obtain all maximal chains.
This gives the comparability graph $\comp(P)$, which is undirected, by enumerating all pairs of nodes in each maximal chain.
The most interesting subtask is to enumerate the maximal antichains.
These are precisely the maximal independent sets of $\comp(P)$, which in turn is the same as the set of maximal cliques in the complement of $\comp(P)$.
Computing maximal independent sets or, equivalently, maximal cliques is a key algorithmic problem in combinatorial optimization.
The decision problem CLIQUE, which asks if a given graph has a maximal clique (or independent set) of size at least $k$, is known to NP-complete \cite[GT19]{GareyJohnson:1979}.
This means that there is no hope for a fast algorithm.
Makino and Uno gave an algorithm for CLIQUE which performs well in practice \cite{MakinoUno:2004}, and this is implemented in \polymake.
Equipped with this information, formulating double descriptions of $\OP(P)$ and $\CP(P)$ becomes straightforward.
The details are spelled out in Algorithms~\ref{algo:order} and~\ref{algo:chain}.

\begin{figure}
\caption{Face numbers of the order and chain polytopes of $P_\tau$ for $n=\sum \tau_i = 10$, omitting trivial cases\\}
\scalebox{.8}{
  \rotatebox{90}{%
\parbox{1.7\textwidth}{%
\begin{tabularx}{\linewidth}{@{\extracolsep{\fill}}lrrrrrrrrrrcrrrrrrrrrr@{}}
\toprule
\multicolumn{1}{c}{poset} & 
\multicolumn{10}{c}{$f$-vector of $\OP(\tau)$} && \multicolumn{10}{c}{$f$-vector of $\mathcal{C}(\tau)$} \\
\midrule
$\tau$ &0&1&2&3&4&5&6&7&8&9&\hphantom{x}&0&1&2&3&4&5&6&7&8&9\\
\midrule
(2,2,1,1,1,1,1,1) &  13&74&245&526&770&784&554&265&81&14 && 13&74&245&526&770&784&554&265&81&14\\
(2,2,2,1,1,1,1) &  14&85&297&665&1002&1035&730&342&100&16 && 14&85&298&673&1029&1085&785&378&113&18\\
(2,2,2,2,1,1) &  15&97&358&838&1304&1371&967&443&123&18 && 15&97&361&863&1392&1541&1162&576&173&26\\
(2,2,2,2,2) &  16&110&429&1052&1695&1817&1281&572&150&20 && 16&110&435&1105&1893&2222&1770&920&285&42\\
(3,2,1,1,1,1,1) &  16&102&359&792&1162&1162&792&359&102&16 && 16&102&359&792&1162&1162&792&359&102&16\\
(3,2,2,1,1,1) &  17&116&433&998&1504&1518&1026&453&123&18 && 17&116&437&1028&1598&1678&1186&547&153&22\\
(3,2,2,2,1) &  18&131&519&1257&1964&2021&1364&586&150&20 && 18&131&528&1329&2205&2459&1831&878&249&34\\
(3,3,1,1,1,1) &  19&139&533&1230&1830&1810&1194&513&135&19 && 19&139&533&1230&1830&1810&1194&513&135&19\\
(3,3,2,1,1) &  20&156&641&1576&2466&2518&1674&703&174&22 && 20&156&645&1606&2564&2696&1866&825&216&28\\
(3,3,2,2) &  21&174&761&1972&3196&3310&2182&887&207&24 && 21&174&773&2078&3578&4036&2970&1377&369&46\\
(3,3,3,1) &  23&205&933&2430&3866&3878&2462&965&219&25 && 23&205&949&2542&4206&4446&3018&1281&315&37\\
(4,2,1,1,1,1) &  23&163&589&1285&1824&1739&1118&474&125&18 && 23&163&589&1285&1824&1739&1118&474&125&18\\
(4,2,2,1,1) &  24&184&710&1620&2354&2252&1427&587&148&20 && 24&184&721&1697&2577&2600&1744&756&197&26\\
(4,2,2,2) &  25&206&850&2048&3091&3011&1898&756&179&22 && 25&206&873&2221&3630&3914&2778&1256&333&42\\
(4,3,1,1,1) &  26&221&890&2051&2963&2797&1741&700&171&22 && 26&221&890&2051&2963&2797&1741&700&171&22\\
(4,3,2,1) &  27&245&1066&2632&4002&3885&2420&944&216&25 && 27&245&1077&2709&4236&4277&2806&1166&285&34\\
(4,3,3) &  30&315&1548&4069&6263&5927&3503&1272&267&28 && 30&315&1592&4355&7067&7159&4599&1836&423&46\\
(4,4,1,1) &  33&352&1513&3485&4878&4388&2578&972&221&26 && 33&352&1513&3485&4878&4388&2578&972&221&26\\
(4,4,2) &  34&383&1813&4565&6852&6445&3835&1406&295&30 && 34&383&1824&4642&7086&6848&4254&1664&381&42\\
(5,2,1,1,1) &  38&285&1015&2125&2856&2559&1540&610&150&20 && 38&285&1015&2125&2856&2559&1540&610&150&20\\
(5,2,2,1) &  39&321&1228&2687&3680&3289&1941&744&175&22 && 39&321&1254&2856&4130&3931&2475&1005&245&30\\
(5,3,1,1) &  41&388&1562&3463&4733&4195&2445&920&210&25 && 41&388&1562&3463&4733&4195&2445&920&210&25\\
(5,3,2) &  42&427&1875&4472&6422&5820&3371&1224&261&28 && 42&427&1901&4641&6898&6553&4030&1570&360&40\\
(5,4,1) &  48&624&2694&5943&7841&6616&3645&1290&275&30 && 48&624&2694&5943&7841&6616&3645&1290&275&30\\
(6,2,1,1) &  69&520&1774&3502&4408&3690&2075&769&177&22 && 69&520&1774&3502&4408&3690&2075&769&177&22\\
(6,2,2) &  70&587&2160&4446&5672&4710&2587&926&204&24 && 70&587&2217&4788&6504&5792&3411&1298&297&34\\
(6,3,1) &  72&716&2778&5795&7396&6116&3333&1176&252&28 && 72&716&2778&5795&7396&6116&3333&1176&252&28\\
(7,2,1) &  132&964&3097&5708&6692&5222&2744&953&206&24 && 132&964&3097&5708&6692&5222&2744&953&206&24\\
\bottomrule
\end{tabularx}
    \label{tab:face-numbers}
    }%
  }%
  }
\end{figure}

\begin{algorithm}[th]\DontPrintSemicolon
  \caption{Order polytope}
  \label{algo:order}
  \KwIn{Hasse diagram of extended poset $\hat{P}$}
  \KwOut{Double description of $\OP(P)$}

  compute maximal chains of $P$ \;
  compute comparability graph $\comp(P)$ \;
  compute maximal antichains = maximal independent sets in $\comp(P)$ \;
  $V$ $\leftarrow$ $\emptyset$, $F$ $\leftarrow$ $\emptyset$ \;
  \ForEach{maximal antichain $A$}{
    \ForEach{subset $S\subseteq A$}{
      compute order filter $O$ generated by $S$ \;
      add characteristic vector $\chi_O$ to $V$ \;
    }
  }
  \ForEach{arc $(p,q)$ of Hasse diagram}{
    add inequality $x_p \leq x_q$ to $F$ \;
  }
  \Return $(V,F)$\;
\end{algorithm}

\begin{algorithm}[th]\DontPrintSemicolon
  \caption{Chain polytope}
  \label{algo:chain}
  \KwIn{Hasse diagram of extended poset $\hat{P}$}
  \KwOut{Double description of $\CP(P)$}
  compute maximal chains of $P$ \;
  compute comparability graph $\comp(P)$ \;
  compute maximal antichains = maximal independent sets in $\comp(P)$ \;
  $V$ $\leftarrow$ $\emptyset$, $F$ $\leftarrow$ $\emptyset$ \;
  \ForEach{maximal antichain $A$}{
    \ForEach{subset $S\subseteq A$}{
      add characteristic vector $\chi_S$ to $V$ \;
    }
  }
  \ForEach{maximal chain $p_1 \prec p_2 \prec \dots \prec p_s$}{
    add inequality $x_{p_1} + \dots + x_{p_s} \leq 1$ to $F$ \;
  }
  \Return $(V,F)$ \;
\end{algorithm}

Of course, the maximal ranked posets $P_\tau$ allow for considerable simplifications of the above methods.
In particular, the maximal antichains are precisely the sets of elements of $p$ of fixed rank.
\bigskip

The \polymake implementation of Algorithms \ref{algo:order} and \ref{algo:chain} is available since version 4.11.
With this computing the entire Table~\ref{tab:face-numbers} takes less than a minute on a standard laptop.
Within about 45 minutes we can also find (see Table~\hyperlink{tab:f-vector-running}{5}) the $f$-vectors of our running example $\tau=(5,2,1,4,2,3)$ from Figure~\ref{ex:running}.

\begin{figure}
\hypertarget{tab:f-vector-running}{}
\footnotesize
\caption{$f$-vector of the running example $\tau=(5,2,1,4,2,3)$}
\noindent
\begin{tabularx}{\linewidth}{@{\extracolsep{\fill}}lccccccccc@{}}
  \toprule
   &0&1&2&3&4&5&6&7&8 \\
  \midrule
  $\OP(\tau)$ & 61& 1306& 13459& 79115& 296362& 759353& 1393462& 1887296& 1922781\\
  $\CP(\tau)$ &61 &1306 &13935 &87979 &364142 &1053486 &2220180 &3500405 &4196664 \\
  \midrule
  &9&10&11&12&13&14&15&16 \\
  \midrule
  $\OP(\tau)$ & 1488969 & 878903& 393545& 131842& 32207& 5492& 607& 38 \\
  $\CP(\tau)$ &3857441 &2720641& 1462271 &589116& 172550& 34780& 4336& 257\\
  \bottomrule
\end{tabularx}

\end{figure}

\printbibliography

@article {FFP20,
    AUTHOR = {Fang, Xin and Fourier, Ghislain and Pegel, Christoph},
     TITLE = {The {M}inkowski property and reflexivity of marked poset
              polytopes},
   JOURNAL = {Electron. J. Combin.},
  FJOURNAL = {Electronic Journal of Combinatorics},
    VOLUME = {27},
      YEAR = {2020},
    NUMBER = {1},
     PAGES = {Paper No. 1.27, 19},
   MRCLASS = {52B20 (06A07)},
  MRNUMBER = {4061069},
MRREVIEWER = {Mizan R. Khan},
}

@article{Peg18,
  title={The Face Structure and Geometry of Marked Order Polyhedra},
  author={Pegel, Christoph},
  journal={Order},
  volume={35},
  number={3},
  pages={467--488},
  year={2018},
  publisher={Springer}
}

@article{HL16,
    AUTHOR = {Hibi, Takayuki and Li, Nan},
     TITLE = {Unimodular equivalence of order and chain polytopes},
   JOURNAL = {Math. Scand.},
  FJOURNAL = {Mathematica Scandinavica},
    VOLUME = {118},
      YEAR = {2016},
    NUMBER = {1},
     PAGES = {5--12},
      ISSN = {0025-5521},
   MRCLASS = {52B11 (06A07)},
  MRNUMBER = {3475096},
MRREVIEWER = {Geir Agnarsson},
       DOI = {10.7146/math.scand.a-23291},
       URL = {https://doi.org/10.7146/math.scand.a-23291}
}

@article {HLSS17,
    AUTHOR = {Hibi, Takayuki and Li, Nan and Sahara, Yoshimi and Shikama,
              Akihiro},
     TITLE = {The numbers of edges of the order polytope and the chain
              polytope of a finite partially ordered set},
   JOURNAL = {Discrete Math.},
  FJOURNAL = {Discrete Mathematics},
    VOLUME = {340},
      YEAR = {2017},
    NUMBER = {5},
     PAGES = {991--994},
      ISSN = {0012-365X},
   MRCLASS = {06A07 (05C07 05C62 52B12)},
  MRNUMBER = {3612430},
       DOI = {10.1016/j.disc.2017.01.005},
       URL = {https://doi.org/10.1016/j.disc.2017.01.005},
}

@article {Sta86,
    AUTHOR = {Stanley, Richard P.},
     TITLE = {Two poset polytopes},
   JOURNAL = {Discrete Comput. Geom.},
  FJOURNAL = {Discrete \& Computational Geometry. An International Journal
              of Mathematics and Computer Science},
    VOLUME = {1},
      YEAR = {1986},
    NUMBER = {1},
     PAGES = {9--23},
      ISSN = {0179-5376},
   MRCLASS = {52A25 (52A40)},
  MRNUMBER = {824105},
MRREVIEWER = {P. McMullen},
       DOI = {10.1007/BF02187680},
       URL = {https://doi.org/10.1007/BF02187680},
}

@article {FF16-Mark,
    AUTHOR = {Fang, Xin and Fourier, Ghislain},
     TITLE = {Marked chain-order polytopes},
   JOURNAL = {European J. Combin.},
  FJOURNAL = {European Journal of Combinatorics},
    VOLUME = {58},
      YEAR = {2016},
     PAGES = {267--282},
      ISSN = {0195-6698},
   MRCLASS = {52B12 (06A07)},
  MRNUMBER = {3530633},
MRREVIEWER = {G\'{a}bor Hetyei},
       DOI = {10.1016/j.ejc.2016.06.007},
       URL = {https://doi.org/10.1016/j.ejc.2016.06.007},
}

@article {FFLP,
    AUTHOR = {Fang, Xin and Fourier, Ghislain and Litza, Jan-Philipp and
              Pegel, Christoph},
     TITLE = {A continuous family of marked poset polytopes},
   JOURNAL = {SIAM J. Discrete Math.},
  FJOURNAL = {SIAM Journal on Discrete Mathematics},
    VOLUME = {34},
      YEAR = {2020},
    NUMBER = {1},
     PAGES = {611--639},
      ISSN = {0895-4801},
   MRCLASS = {52B20 (06A07 14T15 52B05)},
  MRNUMBER = {4071396},
MRREVIEWER = {G\'{a}bor Hetyei},
       DOI = {10.1137/18M1228529},
       URL = {https://doi.org/10.1137/18M1228529},
}

@inproceedings {Geissinger,
    AUTHOR = {Geissinger, Ladnor},
     TITLE = {The face structure of a poset polytope},
 BOOKTITLE = {Proceedings of the {T}hird {C}aribbean {C}onference on
              {C}ombinatorics and {C}omputing ({B}ridgetown, 1981)},
     PAGES = {125--133},
 PUBLISHER = {Univ. West Indies, Cave Hill},
      YEAR = {1981},
   MRCLASS = {52A25 (06A10 51M20)},
  MRNUMBER = {657196},
MRREVIEWER = {E.\ Hertel},
}

@article{polymake:2017,
    author = {Assarf, Benjamin and Gawrilow, Ewgenij and Herr, Katrin and Joswig, Michael and Lorenz, Benjamin and Paffenholz, Andreas and Rehn, Thomas},
     title = {Computing convex hulls and counting integer points with \polymake},
   journal = {Math. Program. Comput.},
    volume = {9},
      year = {2017},
    number = {1},
     pages = {1-38},
   mrclass = {90C57 (52 90-04)},
  mrnumber = {3613012},
       doi = {10.1007/s12532-016-0104-z},
       url = {http://dx.doi.org/10.1007/s12532-016-0104-z},
     arxiv = {1408.4653v2},
}

@article {AvisBremnerSeidel:1997,
    AUTHOR = {Avis, David and Bremner, David and Seidel, Raimund},
     TITLE = {How good are convex hull algorithms?},
      NOTE = {11th ACM Symposium on Computational Geometry (Vancouver, BC, 1995)},
   JOURNAL = {Comput. Geom.},
  FJOURNAL = {Computational Geometry. Theory and Applications},
    VOLUME = {7},
      YEAR = {1997},
    NUMBER = {5-6},
     PAGES = {265--301},
      ISSN = {0925-7721},
   MRCLASS = {52B55 (68U05)},
  MRNUMBER = {1447243},
MRREVIEWER = {Rade \v{Z}ivaljevi\'{c}},
       DOI = {10.1016/S0925-7721(96)00023-5},
}

@InCollection{HDCG3:convex+hulls,
  author = 	 {Seidel, Raimund},
  title = 	 {Convex hull computations},
  booktitle = 	 {Handbook of Discrete and Computational Geometry},
  publisher =	 {CRC Press, Boca Raton, FL},
  year =         {2018},
  editor =	 {T\'oth, Csaba D. and Goodmann, Jacob E. and O'Rourke, Joseph},
  chapter =	 26,
  page =         {687-703},
  note =         {3rd edition}
}

@BOOK{Ziegler:1995,
  title = {Lectures on polytopes},
  publisher = {Springer-Verlag},
  year = {1995},
  author = {Ziegler, G{\"u}nter M.},
  volume = {152},
  pages = {x+370},
  series = {Graduate Texts in Mathematics},
  address = {New York},
  isbn = {0-387-94365-X},
  mrclass = {52Bxx},
  mrnumber = {MR1311028 (96a:52011)},
  mrreviewer = {Margaret M. Bayer}
}

@article {KaibelPfetsch:2002,
    AUTHOR = {Kaibel, Volker and Pfetsch, Marc E.},
     TITLE = {Computing the face lattice of a polytope from its vertex-facet incidences},
   JOURNAL = {Comput. Geom.},
  FJOURNAL = {Computational Geometry. Theory and Applications},
    VOLUME = {23},
      YEAR = {2002},
    NUMBER = {3},
     PAGES = {281--290},
      ISSN = {0925-7721},
   MRCLASS = {52B55 (52C40 68U05)},
  MRNUMBER = {1927137},
       DOI = {10.1016/S0925-7721(02)00103-7},
       URL = {http://dx.doi.org/10.1016/S0925-7721(02)00103-7},
}

@article {KliemStump:2022,
    AUTHOR = {Kliem, Jonathan and Stump, Christian},
     TITLE = {A new face iterator for polyhedra and for more general finite
              locally branched lattices},
   JOURNAL = {Discrete Comput. Geom.},
  FJOURNAL = {Discrete \& Computational Geometry. An International Journal
              of Mathematics and Computer Science},
    VOLUME = {67},
      YEAR = {2022},
    NUMBER = {4},
     PAGES = {1147--1173},
      ISSN = {0179-5376,1432-0444},
   MRCLASS = {52B05 (05A15 05B35 06A07 52-08 68Q25)},
  MRNUMBER = {4419622},
MRREVIEWER = {Lionel\ Pournin},
       DOI = {10.1007/s00454-021-00344-x},
       URL = {https://doi.org/10.1007/s00454-021-00344-x},
}

@incollection {MakinoUno:2004,
    AUTHOR = {Makino, Kazuhisa and Uno, Takeaki},
     TITLE = {New algorithms for enumerating all maximal cliques},
 BOOKTITLE = {Algorithm theory---{SWAT} 2004},
    SERIES = {Lecture Notes in Comput. Sci.},
    VOLUME = {3111},
     PAGES = {260--272},
 PUBLISHER = {Springer, Berlin},
      YEAR = {2004},
      ISBN = {3-540-22339-8},
   MRCLASS = {05C70 (05C85)},
  MRNUMBER = {2159537},
       DOI = {10.1007/978-3-540-27810-8\_23},
       URL = {https://doi.org/10.1007/978-3-540-27810-8_23},
}

@book {GareyJohnson:1979,
    AUTHOR = {Garey, Michael R. and Johnson, David S.},
     TITLE = {Computers and intractability},
      NOTE = {A guide to the theory of NP-completeness, A Series of Books in the Mathematical Sciences},
 PUBLISHER = {W. H. Freeman and Co., San Francisco, Calif.},
      YEAR = 1979,
     PAGES = {x+338},
      ISBN = {0-7167-1045-5},
   MRCLASS = {68C25 (03D15)},
  MRNUMBER = 519066,
MRREVIEWER = {Pavel Pudl\'{a}k},
}

@article {freijhollanti2024fvector,
    AUTHOR = {Freij-Hollanti, Ragnar and Lundstr\"om, Teemu},
     TITLE = {{$f$}-vector inequalities for order and chain polytopes},
   JOURNAL = {Math. Scand.},
  FJOURNAL = {Mathematica Scandinavica},
    VOLUME = {130},
      YEAR = {2024},
    NUMBER = {3},
     PAGES = {467--486},
      ISSN = {0025-5521,1903-1807},
   MRCLASS = {52B20},
  MRNUMBER = {4824811},
}
\end{document}